\title[Nonlocal traffic flow models]{
  Sharp critical thresholds for a class of nonlocal traffic flow models}
\author[Thomas Hamori]{Thomas Hamori}
\address[Thomas Hamori]{\newline Department of Mathematics, \ 
 University of South Carolina, Columbia, SC 29208, USA}
\email{thamori@email.sc.edu}
\author[Changhui Tan]{Changhui Tan}
\address[Changhui Tan]{\newline Department of Mathematics, \ 
 University of South Carolina, Columbia, SC 29208, USA}
\email{tan@math.sc.edu}
\subjclass[2010]{35B51, 35B65, 35L65, 35L67}
\keywords{nonlocal conservation law, traffic flow,
  critical threshold, global regularity, shock formation}
\thanks{\emph{Acknowledgment.} This work is supported by the NSF grants
DMS \#1853001 and \#2108264, and a UofSC ASPIRE I grant (2020). }
\numberwithin{equation}{section}
\newtheorem{theorem}{Theorem}[section]
\newtheorem{lemma}{Lemma}[section]
\newtheorem{proposition}{Proposition}[section]
\newtheorem{remark}{Remark}[section]
\def\pa{\partial}
\def\rhob{\widetilde{\rho}}
\def\R{\mathbb{R}}
\def\rhoM{\rho_M}
\def\KM{\overline{K}}
\def\rhoc{\rho_c}
\def\d{\mathrm{d}}
\begin{document}
\allowdisplaybreaks

\begin{abstract}
We study a class of traffic flow models with nonlocal look-ahead
interactions. The global regularity of solutions depend on the
initial data. We obtain sharp critical threshold conditions that
distinguish the initial data into a trichotomy: subcritical initial
conditions lead to global smooth solutions, while two types of
supercritical initial conditions lead to two kinds of finite time
shock formations.
The existence of non-trivial subcritical initial data 
indicates that the nonlocal look-ahead interactions can help avoid
shock formations, and hence prevent the creation of traffic jams.
\end{abstract}

\maketitle 

\section{Introduction}
The history of mathematical theory of traffic flow dates back to the
1920s. Many successful models have been proposed and studied to
understand the interactions and the emergent behaviors of vehicles on
the road.

One popular class of macroscopic traffic flow models are based on the
continuum description of the dynamics of the traffic density 
\begin{equation}\label{eq:continuity}
  \pa_t\rho+\pa_x(f(\rho))=0,\quad f(\rho)=\rho u(\rho).
\end{equation}
Here, $f$ is known as the \emph{flux}, which depends on local traffic
density $\rho=\rho(t,x)$. The traffic velocity $u$ is modeled through the
relation $u=u(\rho)$. A fundamental assumption is that $u$ is a
decreasing function in $\rho$, meaning vehicles slow down as 
traffic density increases.

A celebrated model under this framework is the
Lighthill-Whitham-Richards (LWR) model
\cite{lighthill1955kinematic,richard1956highway}, where
the velocity $u(\rho)=1-\rho$ decays linearly in $\rho$. The corresponding
flux reads
\begin{equation}\label{eq:LWR}
  f(\rho)=\rho(1-\rho).
\end{equation}
The LWR model successfully captures the phenomenon of shock formation,
which is responsible for the creation of traffic jams.

The flux in \eqref{eq:LWR} is concave and symmetric (with respect to
$\rho=\frac12$). However, statistical data from real-world traffic
networks suggest that the flux should be neither concave nor
symmetric. Rather, observed empirical fluxes are right-skewed and
become convex when the density is large, see
e.g. \cite{drake1965statistical}. In particular, a family of
fluxes were introduced in \cite{pipes1966car} that better fit the data
\begin{equation}\label{eq:fluxfJ}
  f_J(\rho)=\rho(1-\rho)^J,\quad J>0.
\end{equation}
For $J>1$, the flux $f_J$ is right-skewed, and switches from concave
to convex at a point $\rhoc=\frac{2}{J+1}\in(0,1)$.

In this paper, we consider a general class of fluxes with the
following hypotheses
\begin{equation}\label{eq:f}
 f\in C([0,1])\cap C^\infty([0,1)),\,\, f(0)=f(1)=0,\,\, f'(0)>0,\,\, f''(\rho)\begin{cases}
    <0& \rho\in[0,\rhoc),\\
    >0& \rho\in(\rhoc,1),
 \end{cases}
\end{equation}
with a parameter $\rhoc\in(0,1]$.
The assumptions in \eqref{eq:f} cover two scenarios of our concern.
First, when $\rhoc=1$, the flux is concave in $[0,1]$.
Examples include the flux \eqref{eq:LWR} in the LWR model, as well as 
fluxes in \eqref{eq:fluxfJ} with $J\in(0,1]$.
Second, when $\rhoc\in(0,1)$, the convexity of $f$ changes at $\rhoc$.
The fluxes in \eqref{eq:fluxfJ} with $J>1$ lie in this category.

The system \eqref{eq:continuity} with flux \eqref{eq:f} is a scalar
conservation law. The behaviors of global solutions have been
well-studied, see e.g. the book \cite{dafermos2016hyperbolic}. 
In particular, the system develops shock singularity in finite time, for any
generic smooth initial data that is not monotone decreasing.

We are interested in the following class of traffic flow models
with nonlocal look-ahead interaction
\begin{equation}\label{eq:main}
  \pa_t\rho+\pa_x\big(f(\rho)e^{-\rhob}\big)=0,\quad \rhob(t,x) =
  \int_0^\infty K(y)\rho(t,x+y)\,dy.
\end{equation}
Here, the term $e^{-\rhob}$ is known as the Arrhenius-type
\emph{slowdown factor}.
$\rhob$ represents the heaviness of the traffic ahead, weighted by a
kernel $K$. 

The system \eqref{eq:main} was first introduced by 
Sopasakis and Katsoulakis \cite{sopasakis2006stochastic}
where the flux $f$ is taken as in the LWR model \eqref{eq:LWR}, and
the interaction kernel
\begin{equation}\label{eq:K-SK}
  K(x)=1_{[0,L]}(x),
\end{equation}
where $1_E$ denotes the indicator function of the set $E$.
They formally derived \eqref{eq:main} from a microscopic cellular
automata (CA) model.
In the SK model, the look-ahead distance is $L$ and the weight is a
constant.
Another class of kernels has been studied numerically in
\cite{kurganov2009non} where
\begin{equation}\label{eq:K-linear}
  K(x)=\begin{cases}1-\frac{x}{L}&0<x<L,\\ 0&x\ge L.\end{cases}
\end{equation}
Finite time shock formations were observed in both models. The
so-called wave breaking phenomenon was studied in \cite{lee2015thresholds}.

Lee in \cite{lee2019thresholds} proposed and studied \eqref{eq:main}
where the flux is taken as \eqref{eq:fluxfJ} with $J=2$. The
non-concave-convex flux can lead to different types of shock formations.
Later in \cite{sun2020class}, the system was derived from a class of
CA models. An intriguing observation was that the parameter $J$ in
\eqref{eq:fluxfJ} corresponds to the number of cells a car moved in
one step of the microscopic dynamics.

The global wellposedness of \eqref{eq:main} and related nonlocal
traffic flow models have been extensively studied under the framework
of nonlocal conservation laws.
The theory of entropic weak solutions has been
established in
\cite{bressan2020traffic,colombo2018nonlocal,keimer2017existence,keimer2018nonlocal}.
These solutions can be discontinuous, allowing the formation of
shocks.

One challenging question is \emph{whether \eqref{eq:main} admits
  global smooth solutions}.
In other words, the question asks whether the nonlocal slowdown
interaction can help prevent shock formations, and consequently avoid
the creation of traffic jams.

A postive answer was given in \cite{lee2019sharp} in a special case
when the flux $f$ is \eqref{eq:LWR}, and the interaction kernel $K$ is
\eqref{eq:K-SK} with  look-ahead distance $L=\infty$, namely
\begin{equation}\label{eq:ourK}
  K(x) = 1_{[0,\infty)}(x),
\end{equation}
and correspondingly
\begin{equation}\label{eq:rhobar}
\rhob(t,x)= \int_x^\infty\rho(t,y)dy.
\end{equation}
A sharp \emph{critical threshold} on the initial data was established
that distinguishes the global behavior of the solutions:
subcritical initial data lead to global smooth solutions while
supercritical initial data lead to finite-time shock formations.
Such critical threshold phenomenon has
been studied in the context of Eulerian dynamics, including the
Euler-Poisson equations
\cite{engelberg2001critical,lee2013thresholds,tadmor2003critical},
the Euler-alignment equations
\cite{carrillo2016critical,tadmor2014critical,tan2020euler}, and more
systems of conservation laws \cite{bhatnagar2021critical,lee2019sharp,tadmor2021critical}.

In this paper, we study the critical threshold phenomenon for
\eqref{eq:main} with the general class of fluxes in \eqref{eq:f}.
Our first result is a generalization of \cite{lee2019sharp},
considering concave fluxes.
\begin{theorem}\label{thm:concave}
  Consider equation \eqref{eq:main} with smooth initial data
  $\rho_0\in L_+^1\cap H^k(\R)$ with $k>3/2$ and $\rho_0(x)\le\rhoM<1$.
  Suppose the flux $f$ is concave, satisfying \eqref{eq:f} with
  $\rhoc=1$.
  Suppose the nonlocal term $\rhob$ satisfies \eqref{eq:rhobar}.
  Then there exists a function $\sigma: [0,1]\to [0,\infty)$ such that 
  \begin{itemize}
  \item If the initial data is subcritical, satisfying 
    \[
      \rho_0'(x)\le\sigma(\rho_0(x)), \quad \forall~ x\in\R,
    \]
    then there exists a global smooth solution, namely for any $T>0$,
    \begin{equation}\label{eq:rhoreg}
      \rho\in C\big([0,T]; L^1_+\cap H^k(\R)\big).
    \end{equation}
  \item If the initial data is supercritical, satisfying
    \[\exists~ x_0\in\mathbb{R} \quad\text{s.t.}\quad \rho_0'(x_0)>\sigma(\rho_0(x_0)),\]
    then the solution must blow up in finite time. More precisely,
    there exists a location $x\in\R$ and a finite time $T_*>0$ such that
   \[\lim_{t\to T_*-}\pa_x\rho(t,x)=+\infty.\]
\end{itemize}
\end{theorem}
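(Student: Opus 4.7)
The plan is to carry out a characteristic analysis of \eqref{eq:main} and to reduce the shock-formation question to the phase-plane dynamics of a scalar Riccati-type ODE. The key observation, already exploited in the LWR case of \cite{lee2019sharp}, is that although the characteristic ODEs for $(\rho,\pa_x\rho)$ are driven by the nonlocal quantity $e^{-\rhob}$, the evolution of $\pa_x\rho$ viewed as a function of $\rho$ along a characteristic is autonomous in $\rhob$, yielding a scalar ODE whose separatrix determines $\sigma$.

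Concretely, I would first establish local wellposedness in $L^1_+\cap H^k(\R)$ together with the standard continuation criterion $\|\pa_x\rho\|_{L^\infty}<\infty$. Since $\pa_x\rhob=-\rho$, equation \eqref{eq:main} reduces to the quasilinear form
\[
\pa_t\rho+f'(\rho)\,e^{-\rhob}\,\pa_x\rho+f(\rho)\,\rho\,e^{-\rhob}=0,
\]
so along characteristics $\dot x=f'(\rho)\,e^{-\rhob}$ one has $\dot\rho=-f(\rho)\,\rho\,e^{-\rhob}\le 0$. This immediately gives the a priori bound $0\le\rho\le\rhoM<1$ and, together with the conserved $L^1$ mass, keeps $e^{-\rhob}$ uniformly bounded both above and away from $0$. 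Differentiating \eqref{eq:main} in $x$ and setting $d:=\pa_x\rho$ yields
\[
\dot d=-e^{-\rhob}\bigl[f''(\rho)\,d^{2}+(2f'(\rho)\rho+f(\rho))\,d+f(\rho)\,\rho^{2}\bigr],
\]
and dividing by $\dot\rho$ cancels the $e^{-\rhob}$ factor, producing the autonomous Riccati ODE
\[
\frac{\d d}{\d\rho}=\frac{f''(\rho)}{f(\rho)\,\rho}\,d^{2}+\left(\frac{2f'(\rho)}{f(\rho)}+\frac{1}{\rho}\right)d+\rho,
\]
whose quadratic coefficient is non-positive by concavity of $f$.

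The next step is a phase-plane analysis of this ODE. The threshold $\sigma$ would be identified as the separatrix, i.e., the distinguished trajectory that remains bounded as $\rho$ ranges over $(0,\rhoM]$. Asymptotic matching near $\rho=0^{+}$, using $f(\rho)\sim f'(0)\rho$ and $f''(0)<0$, pins down the admissible linear germ $\sigma(\rho)\sim -2f'(0)/f''(0)\cdot\rho$; continuing this trajectory via the standard Riccati-to-linear substitution $d=-(f(\rho)\rho/f''(\rho))\,(\ln g)'$ produces $\sigma:[0,1]\to[0,\infty)$ on the whole interval. Uniqueness of ODE trajectories then shows that $\{d\le\sigma(\rho)\}$ is forward-invariant under the $t$-flow. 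For subcritical data, every characteristic starts in this invariant region, so $\pa_x\rho$ is uniformly bounded above; combined with a routine lower bound coming from the same Riccati structure, the continuation criterion delivers the global smooth solution \eqref{eq:rhoreg}. For supercritical data at $x_0$, the trajectory starts strictly above $\sigma$, and the Riccati dynamics drive $d\to+\infty$ at some $\rho_{*}>0$; since $|\dot\rho|$ stays bounded away from zero on $[\rho_{*},\rho_{0}(x_{0})]$, this $\rho$-blowup is realized at a finite time $T_{*}$.

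The main obstacle will be the rigorous construction of $\sigma$ for the full class of concave fluxes in \eqref{eq:f} with $\rhoc=1$. Unlike the closed-form LWR case, one must show via a boundary-layer analysis near the singular endpoint $\rho=0$ (where the Riccati coefficients blow up) and global continuation through $(0,1]$ that $\sigma$ is well-defined, non-negative, and smooth, and that the dichotomy it produces is sharp. Verifying that the asymptotic germ $-2f'(0)/f''(0)\cdot\rho$ uniquely extends into a positive separatrix throughout the interval, without crossing zero or escaping to $+\infty$ at some interior value of $\rho$, is the delicate point.
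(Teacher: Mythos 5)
Your proposal follows essentially the same route as the paper: local wellposedness with the $\|\pa_x\rho\|_{L^\infty}$ continuation criterion, reduction along characteristics to the autonomous phase-plane ODE for $d=\pa_x\rho$ as a function of $\rho$, construction of $\sigma$ as the distinguished trajectory through the degenerate equilibrium $(0,0)$ with slope $-2f'(0)/f''(0)$, comparison/invariance of $\{d\le\sigma(\rho)\}$ for subcritical data, and Riccati-type blowup (using that the nullclines vanish as $\rho\to0$ while $d$ stays bounded below) for supercritical data. The only differences are presentational — you propose a Riccati-to-linear substitution where the paper uses an Euler-polygon/Arzelà--Ascoli construction confined to a cone near $\rho=0$, and you phrase the supercritical blowup in the $\rho$-variable rather than in time — and you correctly identify the wellposedness of $\sigma$ at the singular endpoint and its global boundedness (which for concave $f$ follows from the non-positive quadratic coefficient plus Gr\"onwall) as the delicate points.
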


\begin{remark}
 Theorem \ref{thm:concave} recovers the result in \cite{lee2019sharp}
 when taking the flux $f$ in \eqref{eq:LWR}. 
 The left graph in Figure \ref{fig:CT} illustrates the shape of the
 threshold function $\sigma$. It can be constructed via the procedure
 described in Theorem \ref{thm:sigma}.
 
 Note that the subcritical region allows $\rho_0'(x)$ to take positive values.
 Hence, there is a family of non-monotone decreasing initial data that
 do not lead to shock formations. This provides a strong indication
 that the nonlocal look-ahead interaction can help preventing the creation
 of traffic jams, for subcritical initial configurations.
\end{remark}

\begin{figure}[ht]
  \includegraphics[width=.45\linewidth]{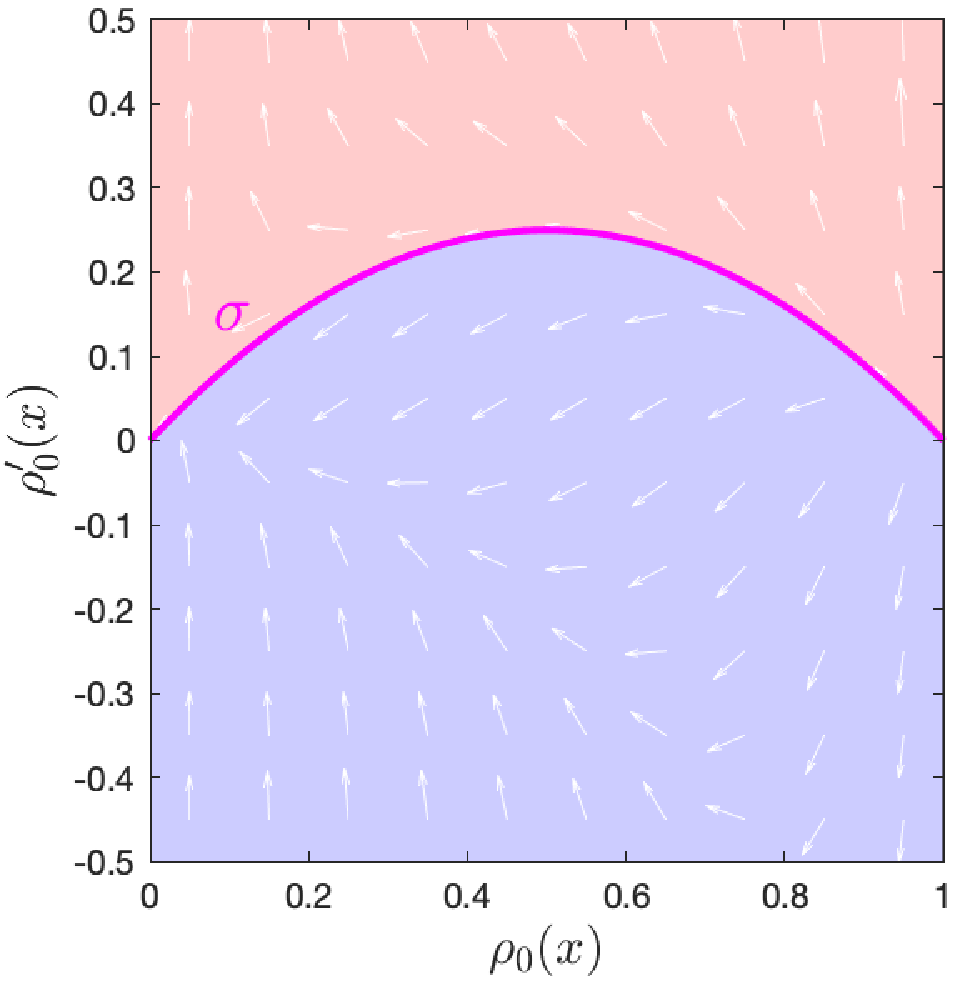}\quad
  \includegraphics[width=.45\linewidth]{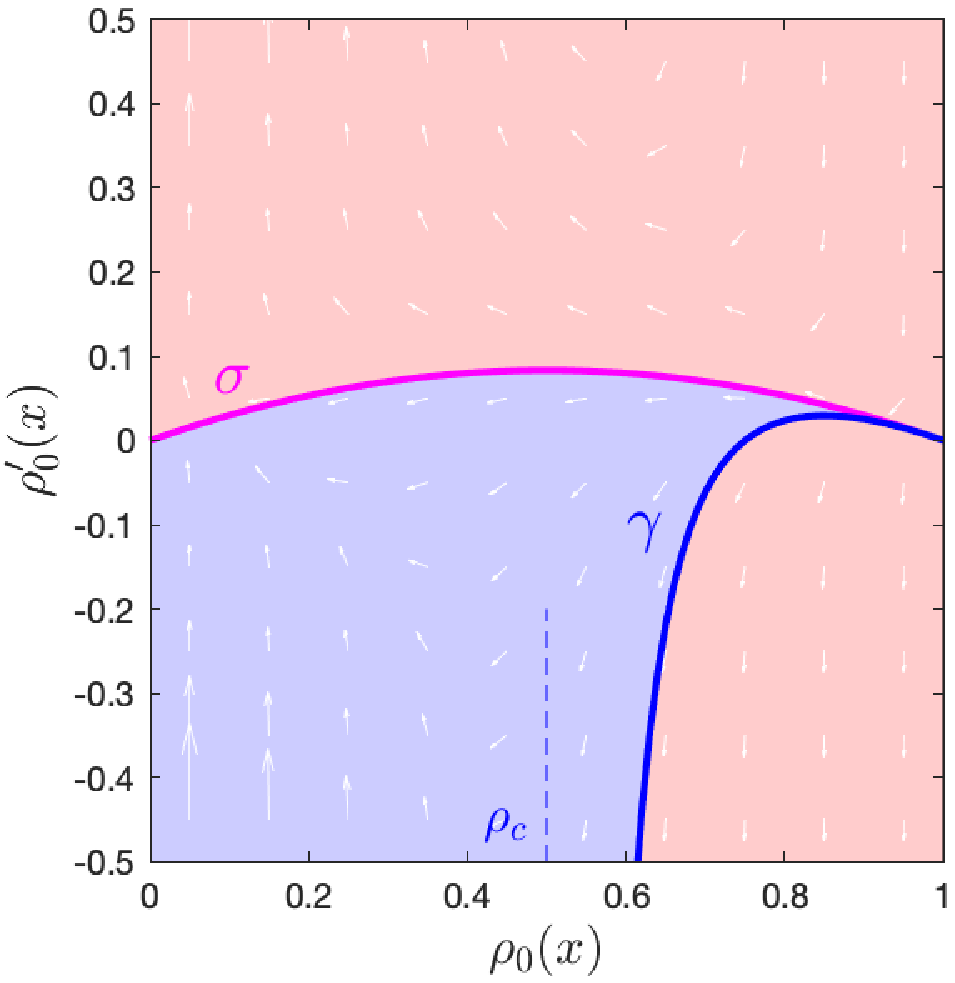}
  \caption{Illustration of the critical thresholds.
  Left: when $f$ is concave, the region above $\sigma$ is supercritical,
  and the region below $\sigma$ is subcritical.
  Right: when $f$ switches from concave to convex at $\rhoc$, the region
  above $\sigma$ is type I supercritical, the region below $\gamma$ is
  type II supercritical, and the remaining region is subcritical.}\label{fig:CT}
\end{figure}

The next main result concerns fluxes that are not concave. The lack of
concavity leads to a major difference in the global behaviors of the
solutions. In particular, there are two different types of shock
formations. There is a trichotomy on initial data that lead to global
regularity and two types of finite time blowup. The following theorem
provides a sharp characterization on the threshold conditions.

\begin{theorem}\label{thm:main}
Consider equation \eqref{eq:main} with smooth initial data
  $\rho_0\in L_+^1\cap H^k(\R)$ with $k>3/2$ and $\rho_0(x)\le\rhoM<1$.
  Suppose the flux $f$ satisfies \eqref{eq:f} with
  $\rhoc<1$, that is, $f$ is concave on $[0,\rhoc]$ and convex on $[\rhoc,1]$.
  Suppose the nonlocal term $\rhob$ satisfies \eqref{eq:rhobar}.
  Then there exists two threshold functions $\sigma$ and $\gamma$ such that 
\begin{itemize}
\item If the initial data is subcritical, satisfying 
  \[
    \gamma(\rho_0(x))<\rho_0'(x)\le\sigma(\rho_0(x)), \quad \forall~
    x\in\R,
  \]
  then there exists a global smooth solution $\rho$ satisfying \eqref{eq:rhoreg}.
  \item If the initial data is type I supercritical, satisfying
    \[\exists~ x_0\in\mathbb{R} \quad\text{s.t.}\quad \rho_0'(x_0)>\sigma(\rho_0(x_0)),\]
    then the solution must blow up in finite time. More precisely,
    there exists a location $x\in\R$ and a finite time $T_*>0$ such that
    \[\lim_{t\to T_*-}\pa_x\rho(t,x)=+\infty,\]
    unless the solution blows up earlier than $T_*$.
  \item If the initial data is type II supercritical, satisfying
    \[\exists~ x_0\in\mathbb{R} \quad\text{s.t.}\quad \rho_0'(x_0)\le\gamma(\rho_0(x_0)),\]
    then the solution must blow up in finite time. More precisely,
    there exists a location $x\in\R$ and a finite time $T_*>0$ such that
   \[\lim_{t\to T_*-}\pa_x\rho(t,x)=-\infty,\]
   unless the solution blows up earlier than $T_*$.
 \end{itemize}
\end{theorem}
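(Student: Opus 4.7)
The plan is to follow the classical characteristic method combined with a phase-plane analysis of a Riccati equation. Introducing the characteristic curves $\dot x(t) = f'(\rho)e^{-\rhob}$ and using $\pa_x\rhob = -\rho$, equation \eqref{eq:main} reduces along a characteristic to
\begin{equation*}
  \dot\rho = -\rho f(\rho)e^{-\rhob},
\end{equation*}
so $\rho$ is strictly decreasing in $t$. Differentiating \eqref{eq:main} in $x$ and restricting to the same characteristic yields a Riccati-type equation for $d := \pa_x\rho$:
\begin{equation*}
  \dot d = -e^{-\rhob}\bigl[f''(\rho)d^2 + (2\rho f'(\rho)+f(\rho))d + \rho^2 f(\rho)\bigr].
\end{equation*}
Dividing $\dot d$ by $\dot\rho$ eliminates both time and the nonlocal factor $e^{-\rhob}$, leaving an autonomous planar ODE
\begin{equation*}
  \frac{dd}{d\rho} = \frac{f''(\rho)}{\rho f(\rho)}d^2 + \Bigl(\frac{2f'(\rho)}{f(\rho)}+\frac{1}{\rho}\Bigr)d + \rho.
\end{equation*}
The fate of $d$ along any characteristic is thus determined purely by the initial point $(\rho_0(x),\rho_0'(x))$ and the planar flow, run with $\rho$ decreasing from $\rho_0(x)$ toward $0$.

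\emph{Construction of the thresholds.} Because the sign of the $d^2$-coefficient changes at $\rhoc$, the Riccati has two qualitatively distinct regimes. On $(0,\rhoc)$ the quadratic $Q(d;\rho) := f''(\rho)d^2 + (2\rho f'(\rho)+f(\rho))d + \rho^2 f(\rho)$ is a downward parabola with two real roots of opposite sign; for $d_0$ above the upper root, $d$ is driven to $+\infty$ in finite characteristic time, in direct analogy with the concave case of Theorem~\ref{thm:concave}. On $(\rhoc,1)$, $Q$ becomes an upward parabola, and for $d_0$ sufficiently negative the dynamics drive $d$ to $-\infty$ before $\rho$ can cross into the concave region. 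The plan is to define $\sigma$ as the distinguished upper separatrix of the planar system on $(0,\rhoM]$, namely the unique solution that neither escapes to $+\infty$ nor descends into the blow-up basin of the convex region as $\rho\downarrow 0$, and $\gamma$ as the distinguished lower separatrix emanating from the convex interval $(\rhoc,\rhoM]$. The Riccati substitution $d = -\frac{\rho f(\rho)}{f''(\rho)}\frac{w'(\rho)}{w(\rho)}$ converts the equation into a linear second-order ODE for $w$; sign-definite solutions of this linear ODE will give $\sigma$ and $\gamma$, and standard Sturm-type comparison will produce the continuity and monotonicity illustrated in Figure~\ref{fig:CT}.

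\emph{Closing the argument and main obstacle.} With $\sigma$ and $\gamma$ in hand, the three conclusions follow by a standard bootstrap. In the subcritical case, the pointwise sandwich $\gamma(\rho(t))<d(t)\le\sigma(\rho(t))$ is preserved along every characteristic, giving a uniform $L^\infty$-bound on $\pa_x\rho$; combined with $\dot\rho\le 0$ and $L^1$-conservation, this suffices to propagate the $H^k$-regularity via a commutator energy estimate and obtain \eqref{eq:rhoreg}. In the type I (resp.\ type II) supercritical case, the trajectory starting at $(\rho_0(x_0),\rho_0'(x_0))$ exits the plane through $d=+\infty$ (resp.\ $d=-\infty$) at some intermediate $\rho_*\ge 0$, corresponding to a finite characteristic time $T_*$; the proviso ``unless the solution blows up earlier'' accounts for a different characteristic reaching its blow-up first. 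I expect the main technical difficulty to be the global analysis of the Riccati across the degeneracy at $\rho=\rhoc$, where the $d^2$-coefficient vanishes and the substitution above becomes singular; verifying that $\sigma$ extends continuously through this degeneracy, and correctly matching the asymptotics as $\rho\to 1^-$ (where $f(\rho)\to 0$) and as $\rho\to 0^+$, will be the crux of the construction.
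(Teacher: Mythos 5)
Your overall skeleton matches the paper's: reduce to the coupled characteristic system for $(\rho,d)$, pass to the autonomous phase-plane ODE \eqref{eq:trajode}, identify $\sigma$ and $\gamma$ as separatrices, and close with comparison arguments, a Riccati blow-up estimate, and the regularity criterion \eqref{eq:condition}. However, the two places you defer are exactly where the substance of the proof lies, and as written they are genuine gaps rather than routine verifications.

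First, the construction and uniqueness of $\sigma$. The point $(0,0)$ is a degenerate equilibrium of \eqref{eq:trajode}: infinitely many trajectories enter it, and (Proposition \ref{prop:initiald}) they all arrive with slope $0$ except exactly one, which arrives with slope $\beta=-2f'(0)/f''(0)>0$; that distinguished trajectory is $\sigma$. Your characterization ("the unique solution that neither escapes to $+\infty$ nor descends into the blow-up basin") does not pin this down, and the sharpness of the threshold hinges on it. In particular, for data with $\sigma(\rho_0)<d_0<\d_+(\rho_0)$ one has $\dot d<0$ initially, so your heuristic "above the upper root of $Q$ implies blow-up" covers only part of the type I region. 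The missing step is Lemma \ref{lem:dlow}: because no trajectory other than $\sigma$ can enter $(0,0)$ with slope $\ge\beta$, any trajectory starting above $\sigma$ stays bounded below by some $c>0$; one then waits until $\rho$ is small enough that $2\d_+(\rho_1)<c$ and runs the Riccati comparison. Without the uniqueness-at-the-origin statement your type I argument does not close.

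Second, your proposed linearization $d=-\frac{\rho f(\rho)}{f''(\rho)}\frac{w'}{w}$ is singular precisely at $\rho=\rhoc$ (where $f''=0$) and degenerate at $\rho=0$ (where $f=0$), i.e., at both points where the thresholds must be anchored; you flag this as "the crux" but do not resolve it. The paper sidesteps linearization entirely: $\sigma$ is built near $\rho=0$ by an explicit Euler scheme confined to the cone $\{(1-\delta)\beta\rho\le\sigma\le(1+\delta)\beta\rho\}$ (Theorem \ref{thm:sigma}), and $\gamma$ is built near $\rhoc$ via the inversion $\eta=1/\gamma$ with $\eta(\rhoc)=0$, the sign of $\eta$ just past $\rhoc$ being read off from the first nonvanishing derivative $f^{(n)}(\rhoc)>0$ (Theorem \ref{thm:gamma}); away from these points \eqref{eq:trajode} is regular and Cauchy--Lipschitz applies, so $\rhoc$ poses no difficulty for the trajectories themselves. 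A smaller but real gap: $\sigma$ and $\gamma$ need not be defined on all of $[0,1]$ and $(\rhoc,1]$ --- Proposition \ref{prop:sigmablow} allows $\sigma\to+\infty$ at some $\rho_*>\rhoc$ --- so the subcritical sandwich alone does not bound $d$ from above while $\rho>\rho_*$; the paper uses the sign of $C(\rho)=-e^{-\rhob}f''(\rho)<0$ on $(\rhoc,1)$ together with the nullcline $\d_-$ to control $d$ there before handing off to $\sigma$.
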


\begin{remark}
  The description of the threshold function $\gamma$ is given in
  Theorem \ref{thm:gamma}. It is a function defined for $\rho>\rhoc$
  with a vertical asymptote at $\rho=\rhoc$, namely
  \[\lim_{\rho\to\rhoc+}\gamma(\rho)=-\infty.\]
  The right graph in Figure \ref{fig:CT} illustrates the shapes of the
  threshold functions.
  Our result is sharp: any smooth initial data lie in exactly one of
  the three regions, which then lead to the corresponding global behaviors.

  Unlike the case when $f$ is concave, the threshold functions
  $\sigma$ and $\gamma$ may only be defined in
  a subset of $[0,1]$ and $(\rhoc,1]$ respectively. 
  See Remark \ref{rmk:CTinfinity} for a clarification on the meaning
  of the threshold conditions if $\sigma(\rho_0(x))$ or
  $\gamma(\rho_0(x))$ is undefined.
\end{remark}

Our final result concerns the class of fluxes in
\eqref{eq:fluxfJ}. 
Theorems \ref{thm:concave} and \ref{thm:main} can be applied to the
system with $f=f_J$ for $J\in(0,1]$ and $J>1$, respectively.
Remarkably, we find explicit expressions for the
corresponding threshold functions.

\begin{theorem}
  Suppose the flux $f=f_J$ satisfies \eqref{eq:fluxfJ}. Then the
  threshold functions $\sigma=\sigma_J$ and $\gamma=\gamma_J$ can be
  explicitly expressed as follows.
  For any $J>0$
  \[\sigma_J(\rho) = \frac{\rho(1-\rho)}{J},\quad \rho\in[0,1].\]
  For any $J>1$ we have $\rhoc=\frac{2}{J+1}$ and
  \[\gamma_J(\rho) =
    \frac{\rho^2(1-\rho)\left(\rho-\frac{4J}{(J+1)^2}\right)}{J(\rho-\rhoc)^2},\quad
  \rho\in(\rhoc,1].\]
\end{theorem}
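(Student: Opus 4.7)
The plan is to invoke the general characterization of the threshold functions provided by Theorems~\ref{thm:sigma} and~\ref{thm:gamma} (which express $\sigma$ and $\gamma$ as specific solutions of a Riccati ODE on the phase plane $(\rho,d)$ with $d=\pa_x\rho$), and then verify by direct substitution that the explicit formulas proposed for $\sigma_J$ and $\gamma_J$ are the correct particular solutions when the flux is $f_J$.

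First I would derive the relevant ODE on the $(\rho,d)$ plane. Differentiating \eqref{eq:main} in $x$ and using $\pa_x\rhob=-\rho$, one obtains along characteristics $\dot x=f'(\rho)e^{-\rhob}$,
\[
\dot\rho=-e^{-\rhob}\rho f(\rho),\qquad
\dot d=-e^{-\rhob}\bigl[f''(\rho)\,d^2+(2\rho f'(\rho)+f(\rho))\,d+\rho^2 f(\rho)\bigr].
\]
Dividing the two equations eliminates both time and the factor $e^{-\rhob}$, yielding the Riccati equation
\[
\frac{\d d}{\d\rho}=\frac{f''(\rho)}{\rho f(\rho)}\,d^2+\left(\frac{2f'(\rho)}{f(\rho)}+\frac{1}{\rho}\right)d+\rho,
\]
whose distinguished solutions (the upper separatrix passing through the origin and the solution with a vertical asymptote to $-\infty$ at $\rhoc$) are, by Theorems~\ref{thm:sigma} and~\ref{thm:gamma}, the thresholds $\sigma$ and $\gamma$.

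Next I would specialize to $f_J(\rho)=\rho(1-\rho)^J$. A short computation gives
\[
f_J'(\rho)=(1-\rho)^{J-1}(1-(J+1)\rho),\qquad f_J''(\rho)=J(1-\rho)^{J-2}((J+1)\rho-2),
\]
which reduces the Riccati equation to
\[
\frac{\d d}{\d\rho}=\frac{J((J+1)\rho-2)}{\rho^2(1-\rho)^2}\,d^2+\frac{3-(2J+3)\rho}{\rho(1-\rho)}\,d+\rho.
\]
Substituting $d=\rho(1-\rho)/J$ into this ODE, both sides collapse to $(1-2\rho)/J$, confirming that $\sigma_J$ solves the equation; an analogous, slightly more involved substitution verifies the formula for $\gamma_J$. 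The final step is to identify these particular solutions as the threshold functions themselves. For $\sigma_J$ I would note that $\sigma_J(0)=\sigma_J(1)=0$ and $\sigma_J>0$ on $(0,1)$, which matches the separatrix characterization of Theorem~\ref{thm:sigma}. For $\gamma_J$ I would verify that the numerator $\rho^2(1-\rho)(\rho-4J/(J+1)^2)$ is strictly negative at $\rho=\rhoc$ (since $\rho-4J/(J+1)^2=(2-2J)/(J+1)^2<0$ for $J>1$), forcing $\gamma_J(\rho)\to-\infty$ as $\rho\to\rhoc+$, together with $\gamma_J\le 0$ on $(\rhoc,1]$, matching the asymptotic description in Theorem~\ref{thm:gamma}.

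The main obstacle is the algebraic verification for $\gamma_J$: the numerator $\rho^2(1-\rho)(\rho-4J/(J+1)^2)$ and denominator $J(\rho-\rhoc)^2$ produce several cancellations when squared and inserted into the quadratic and linear coefficients of the Riccati ODE, and care is needed to track the powers of $(1-\rho)$ and $(\rho-\rhoc)$ so that the right-hand side matches $\d\gamma_J/\d\rho$ exactly. Once this bookkeeping is complete, uniqueness of the Riccati flow through the prescribed boundary data identifies $\sigma_J$ and $\gamma_J$ unambiguously as the threshold functions.
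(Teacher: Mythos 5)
Your proposal follows essentially the same route as the paper: derive the phase-plane Riccati equation \eqref{eq:trajode}, verify by direct substitution that $\sigma_J$ and $\gamma_J$ solve it for $f=f_J$, and then identify them with the threshold trajectories via the uniqueness statements of Theorems \ref{thm:sigma} and \ref{thm:gamma}. The one point to tighten is the identification step for $\sigma_J$: Theorem \ref{thm:sigma} pins down $\sigma$ by the conditions $\sigma(0)=0$ \emph{and} $\sigma'(0)=\beta=-2f'(0)/f''(0)$, not by vanishing at both endpoints together with positivity --- by Proposition \ref{prop:initiald} there are infinitely many trajectories through the origin (those with slope $0$), so the conditions you cite do not by themselves single out the separatrix. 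The fix is a one-line computation, which is what the paper does: $f_J'(0)=1$ and $f_J''(0)=-2J$ give $\beta_J=1/J$, which matches $\sigma_J'(0)=1/J$. Your identification of $\gamma_J$ via the sign of the numerator at $\rhoc$ forcing $\gamma_J(\rho)\to-\infty$ as $\rho\to\rhoc+$ is exactly the paper's argument.
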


We would like to mention that all our results are based on the
particular choice of kernel in \eqref{eq:ourK}. This allows us to
obtain sharp results.
The kernel $K=1_{[0,\infty)}$ features a jump at the origin,
representing that the interaction is \emph{look-ahead}.
Indeed, such jump drives the main phenomenon: global regularity for a
class of non-trivial subcritical initial data.
We believe the same phenomenon holds for general look-ahead interactions,
where the kernel has the same jump structure at the origin,
like \eqref{eq:K-SK} and \eqref{eq:K-linear}.
We shall leave the generalization for future investigation.

The rest of the paper is organized as follows.
In Section \ref{sec:lwp}, we establish a local wellposedness theory for a
general class of nonlocal traffic flow models, including the system
\eqref{eq:main} of our concern.
In Section \ref{sec:thresholds}, we provide unique constructions of
the threshold functions $\sigma$ and $\gamma$.
In Section \ref{sec:gwp}, we study the global behaviors of solutions
for the three types of initial data, proving Theorems
\ref{thm:concave} and \ref{thm:main}.

\section{Local Wellposedness and regularity criteria}\label{sec:lwp}					
In this section, we establish a local wellposedness theory for a
general class of nonlocal traffic flow models
\begin{equation}\label{eq:general}
  \pa_t\rho+\pa_x\big(f(\rho)e^{-\rhob}\big)=0,\quad \rhob(t,x) =
  \int_\R K(y)\rho(t,x+y)\,dy.
\end{equation}
We shall present the theorem with general assumptions on the kernel
$K$:
\begin{equation}\label{eq:Kgeneral}
  K\in BV(\R),\quad 0\le K(x)\le\KM.
\end{equation}
Here, we only require $K$ to be bounded, nonnegative, and have bounded
total variation. In particular, the interaction does not need to be
look-ahead.
We shall comment that all look-ahead interactions \eqref{eq:K-SK},
\eqref{eq:K-linear} and \eqref{eq:ourK}
satisfy the assumption \eqref{eq:Kgeneral},
with $\KM=1$ and $|K|_{BV}\leq2$.

Let us start with the statement of the local wellposedness theory.
\begin{theorem}[Local wellposedness]\label{thm:lwp}
  Let $k>\frac32$. Consider \eqref{eq:general} with smooth initial condition 
  \[\rho_0 \in L^1_+\cap H^k(\R).\]
 Assume the flux $f$ satisfies \eqref{eq:f}, and the kernel $K$
 satisfies \eqref{eq:Kgeneral}. 
 Then there exists a time $T>0$ such that solution  $\rho=\rho(t,x)$ exists and
 \[\rho\in C\big([0,T];L^1_+\cap H^k(\R)\big).\]
Moreover, the solution exists in $[0,T]$ as long as 
\begin{equation}\label{eq:condition}
\int_0^T\|\partial_x\rho(t,\cdot)\|_{L^\infty}dt <\infty.
\end{equation}
\end{theorem}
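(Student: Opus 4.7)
The plan is to follow the standard energy method for quasilinear transport equations, adapted to the nonlocal setting. Writing \eqref{eq:general} in advective form
\[
\pa_t\rho + f'(\rho)e^{-\rhob}\pa_x\rho = f(\rho)e^{-\rhob}\pa_x\rhob,
\]
I view the right-hand side as a lower-order forcing, because the BV assumption on $K$ makes the nonlocal term smoothing: since $K'$ is a finite Radon measure with total variation $|K|_{BV}$, integrating by parts gives the crucial bound $\|\pa_x^j\rhob\|_{L^p}\le |K|_{BV}\|\pa_x^{j-1}\rho\|_{L^p}$ for every $j\ge 1$ and $p\in[1,\infty]$, so derivatives of $\rhob$ cost strictly fewer derivatives on $\rho$. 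This is what makes the nonlocal feedback benign even when $K\notin L^1(\R)$, as for the kernel \eqref{eq:ourK}.

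For existence I would use a viscous approximation, adding $\epsilon\pa_{xx}\rho$ to the right of \eqref{eq:general} and invoking parabolic theory for a smooth approximate solution $\rho^\epsilon$. Uniform $H^k$ bounds independent of $\epsilon$ come from applying $\pa_x^j$ for $0\le j\le k$, pairing with $\pa_x^j\rho^\epsilon$ in $L^2$, and controlling the nonlinear and nonlocal terms via Kato--Ponce commutator and Moser composition estimates together with the smoothing bound above. The resulting inequality
\[
\tfrac{\d}{\d t}\|\rho^\epsilon\|_{H^k}^2\le C\bigl(\|\rho^\epsilon\|_{H^k}\bigr)\|\rho^\epsilon\|_{H^k}^2
\]
provides a uniform existence time $T>0$; weak compactness and strong convergence in $C([0,T];H^{k'})$ with $k'<k$ then yield a limit solution, with continuity into the endpoint space $H^k$ obtained by a Bona--Smith-type argument. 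Nonnegativity of $\rho^\epsilon$ is preserved by the parabolic maximum principle, since $f(0)=0$ makes the forcing vanish wherever $\rho^\epsilon=0$, and integrating \eqref{eq:general} in $x$ yields $\|\rho(t)\|_{L^1}=\|\rho_0\|_{L^1}$. The pointwise bound $\rho\le\rhoM'<1$ on a short time is preserved by continuity so that $f(\rho)$ stays in its smooth regime.

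Uniqueness is obtained by an $L^2$ estimate on the difference $\eta=\rho_1-\rho_2$ of two solutions: the embedding $H^k\hookrightarrow W^{1,\infty}$ (valid since $k>3/2$), Lipschitzness of $f$ and of $r\mapsto e^{-r}$ on bounded sets, and the smoothing $\|\pa_x\widetilde\eta\|_{L^2}\le|K|_{BV}\|\eta\|_{L^2}$ close a Gronwall inequality for $\|\eta\|_{L^2}^2$. For the continuation criterion \eqref{eq:condition}, revisiting the top-order commutator $[\pa_x^k,f'(\rho)e^{-\rhob}]\pa_x\rho$ via Kato--Ponce and absorbing the nonlocal contribution through $\|\pa_x\rhob\|_{L^\infty}\le|K|_{BV}\|\rho\|_{L^\infty}$ produces a BKM-type bound
\[
\tfrac{\d}{\d t}\|\rho\|_{H^k}^2\le C\bigl(1+\|\pa_x\rho\|_{L^\infty}\bigr)\|\rho\|_{H^k}^2.
\]
Gronwall then propagates the $H^k$ norm on any interval on which \eqref{eq:condition} holds, so the solution extends beyond $T$.

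The main obstacle I anticipate is the careful bookkeeping of high-order commutators: each product appearing in $\pa_x^j(f(\rho)e^{-\rhob}\pa_x\rho)$ must be organised so that every missing derivative is covered either by the transport structure (only one derivative of the unknown at top order) or by the BV smoothing of $\rhob$. Once the structural inequality $\|\pa_x^j\rhob\|_{L^2}\lesssim\|\pa_x^{j-1}\rho\|_{L^2}$ is exploited systematically and the composition factors $f(\rho)$ and $e^{-\rhob}$ are handled by Moser-type estimates on a fixed range $[0,\rhoM']\subset[0,1)$, the remaining steps are routine adaptations of local wellposedness theory for symmetric hyperbolic systems.
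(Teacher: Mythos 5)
Your proposal is correct and follows essentially the same route as the paper: the heart of both arguments is the a priori bound $\frac{\d}{\d t}\|\rho\|_{H^k}^2\le C(1+\|\pa_x\rho\|_{L^\infty})\|\rho\|_{H^k}^2$, obtained from Kato--Ponce commutator and Moser composition estimates combined with the BV smoothing $\|\pa_x\rhob\|\le|K|_{BV}\|\rho\|$ and the maximum principle keeping $\rho$ in $[0,\rhoM]\subset[0,1)$ where $f$ is smooth. You additionally spell out the construction (viscous approximation, Bona--Smith) and uniqueness, which the paper leaves implicit, but this does not change the approach.
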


Local wellposedness of \eqref{eq:general} has been studied in \cite{lee2019sharp}
for specific flux \eqref{eq:LWR} and interaction kernel \eqref{eq:ourK}.
Here, we extend the result to general fluxes and kernels.
We also provide a regularity criterion \eqref{eq:condition}.
It allows us to study global wellposedness based on the control of $\pa_x\rho$.

In the rest of the section, we present a proof of Theorem
\ref{thm:lwp}, using \emph{a priori} energy estimates. 
The focus is on the proper treatment of the nonlinearity in $f$ and
the nonlocality in the term $e^{-\rhob}$, where nontrivial commutator
and composition estimates are used.

\subsection{A priori bounds}
First, we state the \emph{conservation of mass}. Ingegrating
\eqref{eq:general} in $x$ gives
\[
  \frac{d}{dt}\int_\R\rho(t,x)dx =
  -\int_\R\pa_x\big(f(\rho)e^{-\rhob}\big)dx=0.
\]
Let us denote the total mass
\[
  m\coloneqq\int_\R\rho(t,x)\,dx=\int_\R\rho_0(x)\,dx.
\]

Next, we consider the characterstic path $X(t, x)$ originated at $x\in\R$
\[\pa_tX(t,x) = f'(\rho(t,X(t,x)))e^{-\rhob(t,X(t,x))},\quad X(t=0,x)=x.\]
We shall supress the $x$ dependence and write $X(t)$ from now on.
Along each characterstic path, we have
\begin{equation}\label{eq:rhochar}
  \frac{d}{dt}\rho(t,X(t)) = -\rho(t,X(t)) f(\rho(t,X(t)))e^{-\rhob(t,X(t))}.
\end{equation}
This leads to the following \emph{maximum principle}.

\begin{proposition}[Maximum principle]\label{prop:MP}
  Let $\rhoM\in(0,1]$. Let $\rho=\rho(t,x)$ be a classical solution of
  \eqref{eq:general} in $[0,T]\times\R$ with initial condition
  $\rho_0(x)\in[0,\rhoM]$ for all $x\in\R$.
  Then, $\rho(t,x)\in[0,\rhoM]$ for all $x\in\R$ and $t\in[0,T]$.
\end{proposition}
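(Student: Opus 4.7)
The plan is to work directly with the characteristic ODE \eqref{eq:rhochar},
\[
  \frac{d}{dt}\rho(t, X(t)) = -\rho(t, X(t))\,f(\rho(t, X(t)))\,e^{-\rhob(t, X(t))}.
\]
Since $\rho$ is a classical solution on $[0, T]$ and the characteristic speed $f'(\rho)e^{-\rhob}$ is $C^1$ in $x$, the flow $X(t; \cdot)$ is a diffeomorphism of $\R$, so it suffices to track $\rho$ along every characteristic.

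Next I would establish the crucial sign input $f \ge 0$ on $[0, 1]$. Concavity of $f$ on $[0, \rhoc]$ combined with $f(0) = 0$ and $f'(0) > 0$ immediately forces $f > 0$ on $(0, \rhoc]$, which already settles the case $\rhoc = 1$. When $\rhoc < 1$, nonnegativity on $[\rhoc, 1]$ is part of the traffic-flow interpretation $f = \rho u$ with velocity $u \ge 0$, and is directly visible for the family \eqref{eq:fluxfJ}. With this in hand, whenever $\rho(t, X(t)) \in [0, 1]$ the right-hand side of the characteristic ODE is nonpositive, so $t \mapsto \rho(t, X(t; x_0))$ is nonincreasing as long as it remains in $[0, 1]$. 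I would then close the upper bound with a short continuity/bootstrap argument seeded by $\rho_0(x_0) \le \rhoM < 1$, with the borderline $\rhoM = 1$ handled by noting that $f(1) = 0$ turns $\rho = 1$ into an equilibrium of the ODE; this yields $\rho(t, X(t; x_0)) \le \rho_0(x_0) \le \rhoM$ on all of $[0, T]$. Nonnegativity is easier: viewing the ODE as the linear equation $\dot\rho = -g(t)\rho$ with $g(t) = f(\rho)e^{-\rhob}$ bounded on $[0, T]$, Gronwall gives $\rho(t, X(t; x_0)) = \rho_0(x_0)\exp\bigl(-\int_0^t g(s)\,ds\bigr) \ge 0$.

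The only genuine obstacle is the global sign statement $f \ge 0$ on $[0, 1]$, which is not explicit in \eqref{eq:f} but is built into the traffic-flow setup. Once that is granted, the characteristic ODE reduces the proposition to a one-dimensional comparison argument that goes through without difficulty.
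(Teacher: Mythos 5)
Your proof follows essentially the same route as the paper's: reduce to the scalar ODE $\dot\rho = -\rho f(\rho)e^{-\rhob}$ along characteristics, use the equilibria at $\rho=0$ and $\rho=1$ (from $f(0)=f(1)=0$) together with the sign $f>0$ on $(0,1)$ to conclude that $\rho$ is nonincreasing and nonnegative along each characteristic, hence $\rho(t)\in[0,\rhoM]$. One small correction: concavity with $f(0)=0$ and $f'(0)>0$ alone does \emph{not} force $f>0$ on all of $(0,\rhoc]$ when $\rhoc<1$ (e.g.\ $\rho-5\rho^2$ is concave with positive initial slope yet turns negative); you also need $f(\rhoc)\ge0$, which is exactly the implicit nonnegativity of the flux that you correctly flag as built into the traffic-flow setup and that the paper likewise uses without comment.
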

\begin{proof}
  Since $f(0)=f(1)=0$, $\rho=0$ and $\rho=1$ are equilibrium states of
  \eqref{eq:rhochar}. Hence, $\rho_0\in[0,1]$ implies $\rho(t)\in[0,1]$.
  Moreover, $-\rho f(\rho)e^{-\rhob}<0$ for any $\rho\in(0,1)$. Hence,
  if $\rho_0\leq \rhoM<1$, we have
  $\rho(t)<\rho_0\leq\rhoM$ for any $t\ge0$.
\end{proof}

Finally, we present a priori bounds on the nonlocal term $e^{-\rhob}$.
Applying the definition of $\rhob$ in \eqref{eq:general} and the bounds
on the kernel $K$ in \eqref{eq:Kgeneral}, we obtain the bounds 
\begin{equation}\label{eq:rhobbound}
 0\leq \rhob(t,x)\leq \KM m,
\end{equation}
which then implies
\begin{equation}\label{eq:nonlocalbound}
e^{-\KM m}<e^{-\rhob}\leq 1.
\end{equation}
Furthermore, we have the following bound on $\pa_x(e^{-\rhob})$.

\begin{proposition}\label{prop:nonlocalbound}
  Under the same assumptions as in Proposition \ref{prop:MP}, we have
  \begin{equation}\label{eq:nonlocalboundderi}
    \|\pa_x(e^{-\rhob})\|_{L^\infty}\leq |K|_{BV}.
  \end{equation}
\end{proposition}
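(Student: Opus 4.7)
The plan is to factor the derivative and bound each piece separately. We write
\[
\partial_x\bigl(e^{-\rhob}\bigr) = -e^{-\rhob}\,\partial_x\rhob,
\]
and recall from \eqref{eq:nonlocalbound} that $e^{-\rhob}\leq 1$, so it suffices to establish the pointwise bound $|\partial_x\rhob(t,x)|\leq |K|_{BV}$ for all $x\in\R$ and $t\in[0,T]$.

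To compute $\partial_x\rhob$, first change variables $z=x+y$ in the definition of $\rhob$ to move the $x$-dependence onto the kernel, obtaining
\[
\rhob(t,x)=\int_\R K(z-x)\,\rho(t,z)\,dz.
\]
Since $K$ is only assumed to be $BV$ (so not necessarily differentiable in the classical sense), I would proceed by expressing the $x$-derivative as a Riemann--Stieltjes integral against the signed measure $dK$ associated with the $BV$ function $K$. Concretely, differentiating formally under the integral sign and using the reflection $z\mapsto x+y$ again gives
\[
\partial_x\rhob(t,x)=\int_\R K(y)\,\partial_x\rho(t,x+y)\,dy=-\int_\R \rho(t,x+y)\,dK(y),
\]
where the last equality is the $BV$ integration-by-parts identity; the boundary terms at $\pm\infty$ vanish because $\rho(t,\cdot)\in L^1(\R)$ and $K$ is bounded.

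The main (and only) obstacle is making the integration by parts rigorous when $K$ is merely $BV$. I would handle this by the standard mollification argument: convolve $K$ with a smooth approximate identity to obtain $K_\varepsilon\in C^\infty$ with $\|K_\varepsilon\|_{L^\infty}\leq \KM$ and $\int_\R |K_\varepsilon'|\,dy\leq |K|_{BV}$, perform the classical integration by parts on $\rhob_\varepsilon$, and pass to the limit $\varepsilon\to 0^+$ using dominated convergence together with $\rho(t,\cdot)\in L^1\cap L^\infty$.

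Once the identity is justified, the pointwise bound follows immediately: by the maximum principle (Proposition \ref{prop:MP}) we have $0\leq\rho(t,\cdot)\leq \rhoM\leq 1$, hence
\[
|\partial_x\rhob(t,x)|\leq \|\rho(t,\cdot)\|_{L^\infty}\int_\R |dK|(y)\leq \rhoM\,|K|_{BV}\leq |K|_{BV}.
\]
Combining this with $e^{-\rhob}\leq 1$ yields \eqref{eq:nonlocalboundderi}.
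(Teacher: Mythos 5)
Your proof is correct and follows essentially the same route as the paper: factor out $e^{-\rhob}\le 1$, then bound $\pa_x\rhob=\int K(y)\pa_x\rho(t,x+y)\,dy$ by $|K|_{BV}\,\rhoM\le|K|_{BV}$ via integration by parts against $dK$ and the maximum principle. The only difference is that you spell out the mollification argument justifying the $BV$ integration by parts, which the paper states without detail.
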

\begin{proof}
  First, apply \eqref{eq:nonlocalbound} and get
  \[\|\partial_x(e^{-\rhob})\|_{L^\infty} = \|e^{-\rhob}(-\partial_x\rhob)\|_{L^\infty}
    \le \|\pa_x\rhob\|_{L^\infty}.\]
  It remains to control $\pa_x\rhob$. We apply maximum principle and compute
  \begin{equation}\label{eq:parhobbound}
    |\pa_x\rhob(t,x)|=\left|\int_{-\infty}^\infty K(y)\pa_x\rho(t,x+y)\,dy\right|
    \le |K|_{BV}\cdot \rhoM\le |K|_{BV},
  \end{equation}
  which directly implies \eqref{eq:nonlocalboundderi}.
\end{proof}

\subsection{$L^2$ energy estimate}
Let us integrate \eqref{eq:general} against $\rho$ and get
\begin{align*}
  \frac{1}{2}\frac{d}{dt}\|\rho(t,\cdot)\|_{L^2}^2=&\,
  -\int_\R\rho\,\pa_x\big(f(\rho)e^{-\rhob}\big)\,dx
  = \int_\R\pa_x\rho\, f(\rho)e^{-\rhob}\,dx  =  \int_\R\pa_x F(\rho)\, e^{-\rhob}\,dx\\ 
=&\,  \int_\R- F(\rho)\, \pa_x\big(e^{-\rhob}\big)\,dx
\leq \|\pa_x\big(e^{-\rhob}\big)\|_{L^\infty} \|F(\rho)\|_{L^1}.
\end{align*}
Here the function $F$ is the primitive of $f$. 
From \eqref{eq:f}, we know that $f(x)\leq f'(0)x$ for all $x\in[0,1]$. Therefore, we can
estimate
\[|F(x)|=\left|\int_0^xf(y)\,dy\right|\leq \frac{f'(0)}{2}x^2.\]
Since $\rho\in[0,1]$, we get
\[\|F(\rho)\|_{L^1}\leq \frac{f'(0)}{2}\|\rho\|_{L^2}^2.\]
Apply \eqref{eq:nonlocalboundderi} and we conclude with
\begin{equation}\label{eq:l2est}
  \frac{1}{2}\frac{d}{dt}\|\rho(t,\cdot)\|_{L^2}^2\le \frac{f'(0) |K|_{BV}}{2}\|\rho(t,\cdot)\|_{L^2}^2.
\end{equation}



\subsection{$H^k$ energy estimate} 									

Now, we consider the evolution of the homogeneous $\dot{H}^k$ semi-norm of
$\rho$
\[\|\rho(t,\cdot)\|_{\dot{H}^k}=\|\Lambda^k\rho(t,\cdot)\|_{L^2},\]
where $\Lambda=(-\Delta)^{1/2}$ denotes the fractional Laplacian operator.

Let us first state the following estimates. We refer the proofs to
\cite{lee2019sharp} and references therein.

\begin{lemma}[Fractional Leibniz rule]\label{lem:Leibniz}
Let $k\geq0$. There exists a constant $C>0$, depending only on $k$, such that
$$\|gh\|_{\dot H^k} \leq C\big( \|g\|_{L^\infty} \|h\|_{ \dot H^k} +
\|g\|_{\dot H^k} \|h\|_{L^\infty }\big) .$$
\end{lemma}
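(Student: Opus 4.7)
The plan is to prove the fractional Leibniz rule via a Littlewood--Paley decomposition together with Bony's paraproduct calculus. I would introduce the standard dyadic frequency projectors $\Delta_j$ (localized at frequencies of size $2^j$) and the corresponding low-frequency cutoffs $S_j = \sum_{l\le j-1}\Delta_l$. Writing $g=\sum_j \Delta_j g$ and $h=\sum_j \Delta_j h$, Bony's decomposition yields
\[
gh = T_g h + T_h g + R(g,h),
\]
where $T_g h = \sum_j S_{j-1}g \cdot \Delta_j h$ is the low-high paraproduct, $T_h g$ is its symmetric counterpart, and $R(g,h) = \sum_{|j-l|\le 1} \Delta_j g \cdot \Delta_l h$ is the high-high remainder.

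For the paraproduct $T_g h$, each dyadic block $S_{j-1}g \cdot \Delta_j h$ has Fourier support in an annulus of radius comparable to $2^j$, so the pieces are almost orthogonal in $L^2$. Consequently
\[
\|\Lambda^k T_g h\|_{L^2}^2 \le C\sum_j 2^{2jk}\|S_{j-1}g\|_{L^\infty}^2\|\Delta_j h\|_{L^2}^2 \le C\|g\|_{L^\infty}^2\|h\|_{\dot{H}^k}^2,
\]
where I use the $L^\infty$-boundedness of the low-frequency cutoffs (a consequence of Calder\'on--Zygmund theory, or simply the fact that the convolution kernel of $S_{j-1}$ is an $L^1$-normalized Schwartz bump) and the Littlewood--Paley characterization $\|h\|_{\dot{H}^k}^2 \sim \sum_j 2^{2jk}\|\Delta_j h\|_{L^2}^2$. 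A symmetric bound controls $T_h g$ by $\|h\|_{L^\infty}\|g\|_{\dot{H}^k}$.

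The high-high remainder $R(g,h)$ requires more care, because its summands are Fourier-supported in balls of radius $\sim 2^j$ rather than in annuli, so quasi-orthogonality does not apply directly. I would re-project by $\Delta_n$, exploit the constraint $\Delta_n(\Delta_j g\cdot \Delta_l h)=0$ for $n\ge j+O(1)$, and close the estimate via Schur's test on the resulting dyadic sum; the weight $2^{nk}$ can be distributed so that the $L^\infty$ factor falls on either $g$ or $h$. The main obstacle is precisely this bookkeeping in the remainder term; once handled, the three pieces combine into the desired bound $C\bigl(\|g\|_{L^\infty}\|h\|_{\dot{H}^k}+\|g\|_{\dot{H}^k}\|h\|_{L^\infty}\bigr)$. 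Since this is the classical Kato--Ponce inequality, in practice I would cite the standard references (e.g.\ those in \cite{lee2019sharp}) rather than reproduce the argument in full.
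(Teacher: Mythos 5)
Your proposal is essentially correct, but note that the paper itself offers no proof of this lemma: it explicitly defers to \cite{lee2019sharp} ``and references therein,'' so your closing remark that one would in practice cite the standard Kato--Ponce literature is precisely what the authors do. Your reconstruction via Bony's decomposition is the standard argument and the two paraproduct estimates are handled correctly (annular Fourier support gives quasi-orthogonality, $\|S_{j-1}g\|_{L^\infty}\le C\|g\|_{L^\infty}$ does the rest). One small caveat you should make explicit: in the remainder term, after re-projecting with $\Delta_n$ the constraint is $j\ge n-O(1)$, so the dyadic kernel in Schur's test is $2^{(n-j)k}$ summed over $j\ge n-O(1)$, which converges only for $k>0$; the endpoint $k=0$ of the stated lemma must be dispatched separately, but there $\dot H^0=L^2$ and the inequality is immediate from H\"older, $\|gh\|_{L^2}\le\|g\|_{L^\infty}\|h\|_{L^2}$. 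With that remark added, your sketch is a complete and correct substitute for the citation.
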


\begin{lemma}[Commutator estimate]\label{lem:commutator}
Let $k\geq1$. There exists a constant $C>0$, depending only on $k$, such that
$$\|[\Lambda^k,g]h\|_{L^2} \leq C\big( \|\partial_x g\|_{L^\infty} \|h\|_{ \dot H^{k-1}} + \|g\|_{\dot H^k} \|h\|_{L^\infty }\big) ,$$
where the commutator is denoted by $[\Lambda^k,f]g=\Lambda^k(fg)-f\Lambda^kg$.

\end{lemma}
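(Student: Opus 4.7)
The plan is to prove this Kato–Ponce type commutator inequality using Bony's paraproduct decomposition combined with the Littlewood–Paley calculus. Let $\Delta_j$ denote the Littlewood–Paley projector onto frequencies of size $\sim 2^j$, and $S_j = \sum_{j' \leq j-1} \Delta_{j'}$ the corresponding low-frequency projector. First I would decompose the product as
\[
gh = T_g h + T_h g + R(g, h), \qquad T_g h = \sum_j S_{j-1} g \cdot \Delta_j h,
\]
with $R(g,h)$ the diagonal remainder, so that the commutator splits as
\[
[\Lambda^k, g]h = \bigl(\Lambda^k T_g h - g\,\Lambda^k h\bigr) + \Lambda^k T_h g + \Lambda^k R(g, h).
\]
The first piece is a genuine commutator; the other two are products to be bounded directly.

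For $\Lambda^k T_h g$ and $\Lambda^k R(g, h)$, frequency-support considerations force each dyadic block of the product to live at a frequency comparable to where $g$ has its frequency. Combining Bernstein inequalities with the Besov characterization $\|g\|_{\dot H^k}^2 \sim \sum_j 2^{2jk}\|\Delta_j g\|_{L^2}^2$ then yields
\[
\|\Lambda^k T_h g\|_{L^2} + \|\Lambda^k R(g,h)\|_{L^2} \leq C\, \|h\|_{L^\infty} \|g\|_{\dot H^k}.
\]
Here the $\|h\|_{L^\infty}$ factor arises because the high-frequency block of $g$ is multiplied by a low-frequency piece of $h$ (for $T_h g$) or by a nearly-matched frequency block of $h$ (for $R$), both controlled in $L^\infty$.

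The crux is the genuine commutator term. On each dyadic shell, $[\Lambda^k, S_{j-1}g]\Delta_j h$ pairs the multiplier $|\xi|^k$ with the slowly-varying symbol $S_{j-1}g$, which lives at spatial scales above $2^{-j}$. A first-order Taylor expansion of $|\xi|^k$ around the frequencies $\sim 2^j$ where $\Delta_j h$ is supported yields the key dyadic commutator bound
\[
\bigl\|[\Lambda^k, S_{j-1}g]\Delta_j h\bigr\|_{L^2} \leq C\, 2^{j(k-1)} \|\partial_x g\|_{L^\infty} \|\Delta_j h\|_{L^2}.
\]
Summing over $j$ by Cauchy–Schwarz in $\ell^2$ and invoking $\|h\|_{\dot H^{k-1}}^2 \sim \sum_j 2^{2j(k-1)} \|\Delta_j h\|_{L^2}^2$ controls this piece by $C\|\partial_x g\|_{L^\infty}\|h\|_{\dot H^{k-1}}$. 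Combining the three contributions yields the lemma.

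The hard part is establishing the dyadic commutator bound rigorously. Since the symbol $|\xi|^k$ is not smooth at $\xi = 0$ when $k$ is not an even integer, the naive Taylor expansion breaks down at low frequencies and must be performed after introducing a smooth cutoff away from the origin; the remaining smooth-symbol piece is then handled via a standard Coifman–Meyer style commutator estimate for pseudodifferential operators of order $k-1$. This is the classical technical point treated in the original Kato–Ponce work and presented cleanly in references such as the Bahouri–Chemin–Danchin monograph, which is why the paper opts to quote the lemma rather than reprove it.
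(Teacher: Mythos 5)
The paper does not prove this lemma at all: it states it and refers the proof to \cite{lee2019sharp} ``and references therein,'' treating it as a known Kato--Ponce-type commutator estimate. Your proposal supplies the standard proof that those references contain, namely Bony's paraproduct decomposition together with the dyadic commutator lemma for $[\Lambda^k, S_{j-1}g]\Delta_j h$ (this is essentially Lemma 2.97/2.100 of Bahouri--Chemin--Danchin), and the overall strategy is sound: the $T_h g$ and $R(g,h)$ pieces give the $\|g\|_{\dot H^k}\|h\|_{L^\infty}$ term, and the paraproduct commutator gives the $\|\partial_x g\|_{L^\infty}\|h\|_{\dot H^{k-1}}$ term after $\ell^2$ almost-orthogonal summation. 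One bookkeeping point to fix if you write this out: $\Lambda^k T_g h - g\,\Lambda^k h$ is \emph{not} equal to $\sum_j [\Lambda^k, S_{j-1}g]\Delta_j h$; the latter equals $\Lambda^k T_g h - T_g \Lambda^k h$, so your decomposition leaves over the terms $T_{\Lambda^k h}\,g + R(g,\Lambda^k h)$. These are harmless --- Bernstein gives $\|S_{j-1}\Lambda^k h\|_{L^\infty}\lesssim 2^{jk}\|h\|_{L^\infty}$, so they are again controlled by $C\|g\|_{\dot H^k}\|h\|_{L^\infty}$ --- but they must be listed and estimated, not absorbed silently into the ``genuine commutator.'' With that repaired, your argument is a complete and correct substitute for the citation the paper relies on.
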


\begin{lemma}[Composition estimate]\label{lem:composition}
Let $k>0$, $g\in L^{\infty}\cap\dot H^k(\R)$, and $h\in C^{\lceil k \rceil}(\text{Range}(g))$. Then, the composition $h \circ g \in L^{\infty}\cap \dot H^k(\R)$. Moreover, there exists a constant $C>0$, depending on $k$, $\|h\|_{C^{\lceil k \rceil}(\text{Range}(g))}$, and $\|g\|_{L^\infty}$, such that 
$$||h\circ g||_{\dot H^k} \leq C||g||_{\dot H^k}.$$
\end{lemma}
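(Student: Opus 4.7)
The plan is to reduce the composition estimate to the Gagliardo seminorm characterization in the sub-unit fractional range, and to the Fa\`a di Bruno chain rule combined with Gagliardo--Nirenberg interpolation in the super-unit range.

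First I would treat the case $k\in(0,1)$ directly using the intrinsic double-integral characterization
\[
\|u\|_{\dot H^k}^2 = c_k\iint_{\R^2}\frac{|u(x)-u(y)|^2}{|x-y|^{1+2k}}\,dx\,dy.
\]
Since $\lceil k\rceil=1$, by hypothesis $h\in C^1(\mathrm{Range}(g))$ and the mean value theorem gives the pointwise bound $|h(g(x))-h(g(y))|\le \|h'\|_{L^\infty(\mathrm{Range}(g))}|g(x)-g(y)|$. Substituting this into the double integral produces $\|h\circ g\|_{\dot H^k}\le \|h'\|_{L^\infty(\mathrm{Range}(g))}\|g\|_{\dot H^k}$, which is the claim with the stated dependence of $C$. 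The $L^\infty$ bound $\|h\circ g\|_{L^\infty}\le \|h\|_{L^\infty(\mathrm{Range}(g))}$ is immediate.

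For integer $k=n\ge 1$, I would apply Fa\`a di Bruno's formula
\[
\pa_x^n(h\circ g) = \sum_{\pi}c_\pi\, h^{(|\pi|)}(g)\prod_{B\in\pi}\pa_x^{|B|}g,
\]
where $\pi$ ranges over partitions of $\{1,\dots,n\}$. Each factor $h^{(|\pi|)}(g)$ is uniformly bounded by $\|h\|_{C^n(\mathrm{Range}(g))}$ since $g\in L^\infty$. For the remaining product, if $i_1+\cdots+i_j=n$ with each $i_m\ge 1$, the Gagliardo--Nirenberg inequality with exponents $1/p_m = i_m/(2n)$ yields
\[
\Big\|\prod_{m=1}^j \pa_x^{i_m}g\Big\|_{L^2}\le \prod_{m=1}^j\|\pa_x^{i_m}g\|_{L^{p_m}} \lesssim \|g\|_{L^\infty}^{j-1}\|g\|_{\dot H^n},
\]
and summing over partitions gives the claim. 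The fractional range $k=n+s$ with $n\ge 1$, $s\in(0,1)$ is handled by applying the same Fa\`a di Bruno expansion and then taking $\dot H^s$ of each term via the fractional Leibniz rule (Lemma \ref{lem:Leibniz}), invoking the sub-unit case already proved to control $\|h^{(|\pi|)}(g)\|_{\dot H^s}$ (this is where the hypothesis $h\in C^{\lceil k\rceil}$ is essential, so that $h^{(|\pi|)}\in C^1$), and using Gagliardo--Nirenberg on the derivatives of $g$.

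The main obstacle is the combinatorial bookkeeping in the fractional super-unit case: after distributing the $\dot H^s$ derivative across a product with a variable number of factors via iterated Leibniz, one must verify that every interpolation exponent that appears lies in the admissible range and that the final sum of the interpolation weights equals $k/n$, so that the total estimate reproduces the linear dependence $\|h\circ g\|_{\dot H^k}\lesssim \|g\|_{\dot H^k}$ (rather than a higher power). Once the accounting is done, the constant $C$ depends only on $k$, $\|h\|_{C^{\lceil k\rceil}(\mathrm{Range}(g))}$, and $\|g\|_{L^\infty}$, as claimed.
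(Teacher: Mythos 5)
The paper does not prove this lemma; it explicitly defers to \cite{lee2019sharp} ``and references therein,'' so there is no in-text argument to compare against. Your strategy is the standard one from the literature (Gagliardo seminorm for $k\in(0,1)$, Fa\`a di Bruno plus Gagliardo--Nirenberg for integer $k$), and those two cases are essentially complete as you present them. Two small caveats there: the mean value theorem step requires $h\in C^1$ on the \emph{interval} $[\operatorname{ess\,inf}g,\operatorname{ess\,sup}g]$, not merely on the range of $g$ (which need not be connected when $g$ is discontinuous, as it may be for $k<1/2$); and in the integer case the Gagliardo--Nirenberg inequality must be used in its homogeneous form $\|\pa_x^{i}g\|_{L^{2n/i}}\lesssim\|g\|_{L^\infty}^{1-i/n}\|g\|_{\dot H^n}^{i/n}$ so that the final bound involves only $\|g\|_{\dot H^n}$ and not $\|g\|_{H^n}$ --- you do set this up correctly.

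The genuine gap is the case $k=n+s$ with $n\ge1$, $s\in(0,1)$, which is exactly the range the paper needs (e.g.\ $k\in(3/2,2)$ in Theorem \ref{thm:lwp}). The tool you invoke, Lemma \ref{lem:Leibniz}, bounds $\|gh\|_{\dot H^s}$ by $\|g\|_{L^\infty}\|h\|_{\dot H^s}+\|g\|_{\dot H^s}\|h\|_{L^\infty}$, and therefore requires $L^\infty$ control of each factor. But the factors $\pa_x^{i_m}g$ in the Fa\`a di Bruno expansion are \emph{not} in $L^\infty$ under the hypothesis $g\in L^\infty\cap\dot H^k$ alone (already $\pa_xg$ need not be bounded when $k<3/2$). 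So the iterated-Leibniz step as described cannot be carried out with the lemma you cite; one needs the Kato--Ponce/Coifman--Meyer fractional Leibniz rule with general H\"older exponents $\|\Lambda^s(uv)\|_{L^2}\lesssim\|\Lambda^su\|_{L^{p_1}}\|v\|_{L^{q_1}}+\|u\|_{L^{p_2}}\|\Lambda^sv\|_{L^{q_2}}$, together with fractional Gagliardo--Nirenberg interpolation to place each $\pa_x^{i_m}g$ and $\Lambda^s\pa_x^{i_m}g$ in the correct Lebesgue space between $L^\infty$ and $\dot H^k$. Likewise, bounding $\|h^{(|\pi|)}(g)\|_{\dot H^s}$ by the sub-unit case yields a factor $\|g\|_{\dot H^s}$, which must then be interpolated against $\|g\|_{L^\infty}$ and $\|g\|_{\dot H^k}$ so that the total power of $\|g\|_{\dot H^k}$ in each term is exactly one. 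This ``bookkeeping'' is the actual content of the proof in this range, and as written your argument does not establish it; either carry out the Kato--Ponce/GN accounting explicitly or, as the paper does, cite a reference (e.g.\ the Moser-type composition estimates in Runst--Sickel or Taylor) where it is done.
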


To begin with, we act $\Lambda^k$ on \eqref{eq:general}, integrate
against $\Lambda^k \rho$ and get
\begin{align*}
\frac{1}{2}\frac{d}{dt}\|\rho(t,\cdot)\|_{\dot{H}^k}^2 =&\, -\int_\R \Lambda^k\rho\cdot \Lambda^k \pa_x\big(f(\rho)e^{-\rhob}\big) \,dx\\
=&\, -\int_\R \Lambda^k\rho\cdot \Lambda^k \Big( f'(\rho) \pa_x\rho \cdot e^{-\rhob} - f(\rho)\cdot e^{-\rhob}\pa_x\rhob \Big)\,dx\\
=&\, -\int_\R \Lambda^k\rho\cdot \Lambda^k\pa_x\rho\cdot
   f'(\rho)e^{-\rhob} dx -\int_\R \Lambda^k\rho \cdot [\Lambda^k,f'(\rho)e^{-\rhob}]\partial_x\rho\, dx \\
&+ \int_\R \Lambda^k\rho\cdot\Lambda^k\big( f(\rho)e^{-\rhob}\pa_x\rhob \big)\,dx\\
=&\, {\rm I}+{\rm II}+{\rm III}.
\end{align*}

We bound the three terms one by one. For the first term we use integration by parts
\[
{\rm I}= -\int_\R \pa_x\left(  \frac{(\Lambda^k\rho)^2}{2}\right)
         \cdot f'(\rho)e^{-\rhob} \,dx
 = \frac{1}{2}\int_\R (\Lambda^k\rho)^2\cdot\pa_x\big(f'(\rho)e^{-\rhob}\big) \,dx.\]
Applying \eqref{eq:nonlocalbound} and \eqref{eq:nonlocalboundderi}, we estimate
\[\left|\pa_x\big(f'(\rho)e^{-\rhob}\big)\right|
=\left|f''(\rho)e^{-\rhob}\pa_x\rho + f'(\rho)\pa_x(e^{-\rhob})\right|
\leq \|f\|_{C^2([0,\rhoM])}(\|\pa_x\rho\|_{L^\infty}+|K|_{BV}).\]
This leads to the bound
\begin{equation}\label{eq:I}
  {\rm I}\leq \frac{1}{2}\|f\|_{C^2([0,\rhoM])}(\|\pa_x\rho\|_{L^\infty}+|K|_{BV})\|\rho\|_{\dot{H}^k}^2.
\end{equation}

Moving on to the second term, we apply Lemma \ref{lem:commutator} and get
\begin{align*}
  {\rm II}\leq&\,
  \|\rho\|_{\dot H^k} \big\| [\Lambda^k,f'(\rho)e^{-\rhob}]\partial_x\rho\big\|_{L^2}\\
\leq&\, C(k) \|\rho\|_{\dot H^k} \Big( \|\partial_x( f'(\rho)e^{-\rhob} )\|_{L^\infty} \|\pa_x\rho\|_{ \dot H^{k-1}} + \|f'(\rho)e^{-\rhob}\|_{\dot H^k} \|\pa_x\rho\|_{L^\infty }\Big).
\end{align*}
For convinience in notations, we shall use $C$ to denote the
constants, which can change line by line. We will also keep track of
the dependence of the constant with respect to the parameters.

Now we focus on the estimate of $\|f'(\rho)e^{-\rhob}\|_{\dot H^k}$. Apply Lemma \ref{lem:Leibniz}
\begin{equation}\label{eq:IIP1}
  \|f'(\rho)e^{-\rhob}\|_{\dot H^k}\leq C(k)\Big(\|f'(\rho)\|_{L^\infty} \|e^{-\rhob}\|_{ \dot H^k} + \|f'(\rho)\|_{\dot H^k} \|e^{-\rhob}\|_{L^\infty }\Big).
\end{equation}
The term $\|e^{-\rhob}\|_{ \dot H^k}$ can be estimated as follows.
\begin{proposition}\label{prop:nonlocalHk}
For $k \geq 1$, 
\begin{equation}\label{eq:nonlocalHk}
  \|e^{-\rhob}\|_{\dot H^k} \leq C(k, \KM m, |K|_{BV}) \|\rho\|_{\dot
    H^{k-1}}.
\end{equation}
\end{proposition}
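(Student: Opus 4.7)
The plan is to split the estimate into two independent pieces: first, a composition estimate that reduces $\|e^{-\rhob}\|_{\dot H^k}$ to $\|\rhob\|_{\dot H^k}$; second, a convolution estimate that trades $\|\rhob\|_{\dot H^k}$ for $\|\rho\|_{\dot H^{k-1}}$, gaining one derivative thanks to the BV regularity of $K$.

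For the first piece, I would apply Lemma \ref{lem:composition} with $h(z)=e^{-z}$ and $g=\rhob$. The a priori bound \eqref{eq:rhobbound} guarantees $\mathrm{Range}(\rhob)\subseteq[0,\KM m]$, a compact interval on which $e^{-z}$ is smooth and on which every derivative of $h$ is bounded by a constant depending only on $\KM m$. Combined with the bound $\|\rhob\|_{L^\infty}\le \KM m$, Lemma \ref{lem:composition} yields
\[
\|e^{-\rhob}\|_{\dot H^k}\le C(k,\KM m)\,\|\rhob\|_{\dot H^k}.
\]

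For the second piece, observe that $\rhob(t,x)=\int_\R K(y)\rho(t,x+y)\,dy$ is a convolution, so up to conjugation the Fourier multiplier of the map $\rho\mapsto\rhob$ is $\hat K$, giving $|\hat{\rhob}(\xi)|=|\hat K(\xi)|\,|\hat\rho(\xi)|$. Hence, by Plancherel,
\[
\|\rhob\|_{\dot H^k}^2=\int_\R|\xi|^{2(k-1)}\,|\xi\hat K(\xi)|^{2}\,|\hat\rho(\xi)|^2\,d\xi.
\]
The key point is that $K\in BV(\R)$ means its distributional derivative $\pa_x K$ is a finite signed Borel measure with total variation equal to $|K|_{BV}$. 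Since the Fourier transform of any finite measure is bounded in $L^\infty$ by its total variation, and $\widehat{\pa_x K}(\xi)=i\xi\hat K(\xi)$, we obtain $\|\xi\hat K(\xi)\|_{L^\infty_\xi}\le|K|_{BV}$. Plugging this into the integral above gives
\[
\|\rhob\|_{\dot H^k}\le |K|_{BV}\,\|\rho\|_{\dot H^{k-1}},
\]
and combining with the composition bound yields \eqref{eq:nonlocalHk}.

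The only subtle step is the derivative gain in the second piece; this is precisely where the BV hypothesis in \eqref{eq:Kgeneral} is essential, since a merely bounded $K$ would not allow trading a derivative. A Fourier-free alternative, should the authors prefer, is to integrate by parts in $y$: for integer $k$ one writes $\pa_x^k\rhob=-(dK)*\pa_x^{k-1}\rho$ and invokes Young's inequality with the finite measure $dK$, then interpolates to fractional $k$. Either route delivers the same estimate, with a constant depending only on $k$, $\KM m$, and $|K|_{BV}$.
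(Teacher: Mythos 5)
Your proof is correct and follows the same two-step decomposition as the paper: the composition lemma to reduce to $\|\rhob\|_{\dot H^k}$, then a one-derivative gain from $K\in BV$. The only difference is in the second step: the paper works in physical space, writing $\Lambda^k\rhob$ (up to the Hilbert transform) as $\pa_x\Lambda^{k-1}\rhob$, moving the $x$-derivative onto $K$ via integration by parts in $y$, and applying Young's inequality with the finite measure $dK$ --- exactly the ``Fourier-free alternative'' you sketch at the end. Your primary Fourier route is fine in substance, but note one small caveat: for the kernels of interest, such as $K=1_{[0,\infty)}$, $K$ is not integrable, so $\hat K$ is only a tempered distribution (it contains a Dirac mass at the origin) and the pointwise identity $|\hat\rhob(\xi)|=|\hat K(\xi)|\,|\hat\rho(\xi)|$ needs to be interpreted through the well-defined bounded multiplier $\xi\hat K(\xi)=-i\,\widehat{\pa_xK}(\xi)$, which is what your $L^\infty$ bound actually controls; the physical-space version avoids this entirely, which is presumably why the paper adopts it.
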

\begin{proof}
We begin by applying Lemma \ref{lem:composition} with
$g(x)=\rhob(t,x)$ and  $h(x)=e^{-x}$. From \eqref{eq:rhobbound} we know
$\|g\|_{L^\infty}\leq \KM m$. Moreover,
$\|h\|_{C^\infty([0, \KM m])}\le 1$. Therefore, we have
\[\|e^{-\rhob}\|_{\dot H^k} \leq C(k, \KM m)\|\rhob\|_{\dot H^k}.\]
Next, we apply Young's inequality and get
\begin{equation}\label{eq:rhobHk}
  \|\rhob\|_{\dot H^k}=\|\pa_x\Lambda^{k-1}\rho\|_{L^2}
  =\left\|\int_\R K(y)\pa_x(\Lambda^{k-1}\rho)\,dy\right\|_{L^2}
  \leq |K|_{BV}\|\Lambda^{k-1}\rho\|_{L^2}.
\end{equation}
Put together and we conclude with \eqref{eq:nonlocalHk}.
\end{proof}

For the term $\|f'(\rho)\|_{\dot{H}^k}$, we again apply Lemma
\ref{lem:composition} with $g(x)=\rho(t,x)$ and $h(x)=f'(x)$.
From the maximum principle, $\|g\|_{L^\infty}\leq \rhoM<1$. Moreover,
\[\|h\|_{ C^{\lceil k \rceil}(\text{Range}(g))}\leq
\|f\|_{ C^{\lceil k \rceil+1}([0,\rhoM])},\]
which is bounded due to the assumptions on $f$ in \eqref{eq:f}.
Hence,
\begin{equation}\label{eq:fprimeHk}
  \|f'(\rho)\|_{\dot{H}^k}\leq
  C(k, \|f\|_{ C^{\lceil k \rceil+1}([0,\rhoM])})\|\rho\|_{\dot H^k}.
\end{equation}

Applying \eqref{eq:nonlocalboundderi}, \eqref{eq:nonlocalHk} and
\eqref{eq:fprimeHk} to \eqref{eq:IIP1} we get
\[\|f'(\rho)e^{-\rhob}\|_{\dot H^k}\leq
C(k, \KM m, |K|_{BV}, \|f\|_{ C^{\lceil k\rceil+1}([0,\rhoM])})\|\rho\|_{H^k}.\]
Consequently, we have the bound on the second term
\begin{equation}\label{eq:II}
{\rm II} \leq C(k, \KM m, |K|_{BV}, \|f\|_{ C^{\lceil k\rceil+1}([0,\rhoM])})
(1+\|\pa_x\rho\|_{L^\infty})\|\rho\|_{\dot H^k}\|\rho\|_{H^k}.
\end{equation}

Finally, let us estimate the third term using Lemma \ref{lem:Leibniz}
\begin{align*}
{\rm III}\leq&\, \|\rho\|_{\dot H^k} \|f(\rho)e^{-\rhob}\pa_x\rhob\|_{\dot H^k}\\
  \leq &\,C(k) \|\rho\|_{\dot H^k} \Big(
  \|f(\rho)\|_{\dot H^k}\|e^{-\rhob}\pa_x\rhob\|_{L^\infty}
  + \|e^{-\rhob}\|_{ \dot H^k} \|f(\rho) \pa_x\rhob\|_{L^\infty}\Big.\\
  &\hspace{6pc}\Big.+\|\pa_x\rhob\|_{\dot H^k}\|f(\rho)e^{-\rhob}\|_{L^\infty }\Big)\\
= &\,C(k) \|\rho\|_{\dot H^k} \big({\rm III}_1+{\rm III}_2+{\rm III}_3\big).
\end{align*}
For ${\rm III}_1$, use \eqref{eq:nonlocalbound},
\eqref{eq:parhobbound} and \eqref{eq:fprimeHk} (with $f'$ replaced by $f$)
\[{\rm III}_1\leq C(k, \|f\|_{ C^{\lceil k\rceil}([0,\rhoM])}) |K|_{BV}\|\rho\|_{\dot H^k}.\]
For ${\rm III}_2$, use \eqref{eq:parhobbound} and \eqref{eq:nonlocalHk}
\[{\rm III}_2\leq C(k, \KM m, |K|_{BV}) \|f\|_{ C^0([0,\rhoM])}|K|_{BV} \|\rho\|_{\dot H^{k-1}}.\]
For ${\rm III}_3$, use \eqref{eq:nonlocalbound} and \eqref{eq:rhobHk} 
\[{\rm III}_3\leq \|f\|_{ C^0([0,\rhoM])}|K|_{BV} \|\rho\|_{\dot H^k}.\]
All together, we obtain
\begin{equation}\label{eq:III}
  {\rm III}\leq C(k, \KM m, |K|_{BV}, \|f\|_{ C^{\lceil k\rceil}([0,\rhoM])})
\|\rho\|_{\dot H^k}\|\rho\|_{H^k}.
\end{equation}

Collecting the estimates \eqref{eq:I}, \eqref{eq:II}, \eqref{eq:III},
we end up with the estimate on $H^k$ energy $(k\geq1)$ as follows.
\begin{equation}\label{eq:Hkest}
  \frac{1}{2}\frac{d}{dt}\|\rho(t,\cdot)\|_{\dot H^k}^2
  \leq C(k, \KM m, |K|_{BV}, \|f\|_{ C^{\lceil k\rceil+1}([0,\rhoM])})
  (1+\|\pa_x\rho\|_{L^\infty}) \|\rho\|_{\dot H^k}\|\rho\|_{H^k},
\end{equation}
where the constant $C$ is finite under our assumptions on $f$ and $K$.

\subsection{Proof of Theorem \ref{thm:lwp}}
Define an energy
\[Y(t)=\|\rho(t,\cdot)\|_{L^2}^2+\|\rho(t,\cdot)\|_{\dot H^k}^2.\]
Clearly, $Y(t)$ is equivalent to $\|\rho(t,\cdot)\|_{H^k}^2$.
Combining the $L^2$ and $H^k$ energy estimates \eqref{eq:l2est} and
\eqref{eq:Hkest}, we have the bound on the evolution of $Y$ as follows
\begin{equation}\label{eq:energybound}
Y'(t)\leq C(1+\|\pa_x\rho(t,\cdot)\|_{L^\infty})\|\rho(t,\cdot)\|_{H^k}^2.
\end{equation}
Let $k>3/2$, from the Sobolev embedding theorem, we have
$\|\pa_x\rho\|_{L^\infty}\leq C(k)\|\rho\|_{H^k}$. This leads to a
bound
\[Y'(t)\leq C(1+Y^{1/2})Y(t).\]
Clearly, there exists a time $T_*>0$, depending on $Y(0)$ and $C$,
such that $Y(t)$ exists and is bounded for $t\in[0,T_*]$. This
finishes the local wellposedness proof.

Moreover, we apply Gr\"onwall inequality to \eqref{eq:energybound}
and obtain
\[Y(T)\leq Y(0)\exp\left(\int_0^TC(1+\|\pa_x\rho(t,\cdot)\|_{L^\infty})\,dt\right).\]
Therefore, $Y(T)$ remains bounded if criterion \eqref{eq:condition} holds.

\section{Critical Thresholds}\label{sec:thresholds}
In this section, we restrict our attention to our main equation
\eqref{eq:main} with the special kernel \eqref{eq:ourK}.
The goal is to construct threshold functions that distinguish the
global behaviors of the solutions.

From the regularity criterion \eqref{eq:condition}, we know that
the solution is globally regular if and only if $\pa_x\rho$ is
bounded.
Let us denote
\[d=\partial_x\rho.\]
We shall focus on the boundedness of $d$.

Differentiating \eqref{eq:main} in $x$, we can write the dynamics
of $d$ as
\[\pa_td+f'(\rho)e^{-\rhob}\pa_xd=\big(-f''(\rho)d^2-(f(\rho)+2\rho f'(\rho))d-\rho^2f(\rho)\big)e^{-\rhob}.\]
Here we have used the special structure of \eqref{eq:rhobar}. In particular,
\[\pa_x\rhob=-\rho.\]

Let us denote $\dot{d}$ as the time derivative along the characteristic a
path $X(t)$, namely
\[\dot{d}=\frac{d}{dt} d(t,X(t)).\]
Then, together with \eqref{eq:rhochar}, we obtain a coupled dynamics of
$(\rho,d)$ along each characterstic path
\begin{equation}\label{eq:drho}
  \begin{cases}
    \,\dot{\rho} = -\rho f(\rho) e^{-\rhob},\\
    \,\dot{d}=-\big(f''(\rho)d^2+(f(\rho)+2\rho f'(\rho))d+\rho^2f(\rho)\big)e^{-\rhob}.
  \end{cases}
\end{equation}
Note that the only nonlocality in the coupled dynamics \eqref{eq:drho}
appears to be the factor $e^{-\rhob}$. Thus, the trajectories on the
phase plane $(\rho,d)$ depend on the local information. Indeed, if we
express a trajectory as $d=\d(\rho)$, then it satisfies the following
differential equation
\begin{equation}\label{eq:trajode}
\d'(\rho) = \frac{f''(\rho)\d^2+(f(\rho)+2\rho f'(\rho))\d+\rho^2f(\rho)}{\rho f(\rho)}.
\end{equation} 

We will examine the trajectories in the phase plane and investigate
whether the trajectories are bounded or not. The boundedness of $\d$
will then lead to global wellposedness of the system \eqref{eq:main}
by Theorem \ref{thm:lwp}.

There are two special trajectories that serve as thresholds in the
phase plane. They divide the area $\{(\rho,d) : \rho\in[0,1]\}$ into
three regions. Trajectories originated on each region lead to different large
time behaviors.
We call the two trajectories \emph{critical threshold functions},
and denote
them by two functions $\sigma$ and $\gamma$. The trajectories are
expressed as $d=\sigma(\rho)$ and $d=\gamma(\rho)$ respectively.
Figure \ref{fig:CT} illustrates the shapes of the threshold functions.

In the following, we focus on the wellposedness of the two critical
thresholds.

\subsection{The threshold function $\sigma$}
The curve $\sigma$ represents a trajectory that goes across $(0,0)$ in the
phase plane. Since $(0,0)$ is a degenerate equilibrium state of the
phase dynamics, there are infinitely many trajectories such that
$\d(0)=0$. These trajectories satisfy the following property.
\begin{proposition}\label{prop:initiald}
  Let $d=\d(\rho)$ be a trajectory such that $\d(0)=0$. Assume $\d'(0)$
  exists. Then, we must have
  \begin{equation}\label{eq:dprime0}
    \d'(0)=0\quad\text{or}\quad \d'(0)=-\frac{2f'(0)}{f''(0)}.
  \end{equation}
\end{proposition}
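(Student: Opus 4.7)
The plan is to analyze the singular ODE \eqref{eq:trajode} near $\rho = 0$ by Taylor expansion, since both the numerator and denominator vanish at $\rho = 0$ when $\d(0) = 0$, yielding a $0/0$ indeterminate form whose resolution forces an algebraic condition on $\d'(0)$.

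First, I would use the smoothness of $f$ from \eqref{eq:f} to write
\[f(\rho) = f'(0)\rho + \tfrac{1}{2}f''(0)\rho^2 + O(\rho^3), \qquad f'(\rho) = f'(0) + f''(0)\rho + O(\rho^2),\]
together with $f''(\rho) = f''(0) + O(\rho)$. Since $\d(0) = 0$ and $\d'(0) = a$ exists, we also have $\d(\rho) = a\rho + o(\rho)$ as $\rho \to 0^+$.

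Next, I would substitute these expansions into each term of the numerator of \eqref{eq:trajode}. The leading-order contributions are $f''(\rho)\d^2 = f''(0)a^2\rho^2 + o(\rho^2)$, $(f(\rho) + 2\rho f'(\rho))\d = 3 f'(0) a \rho^2 + o(\rho^2)$, and $\rho^2 f(\rho) = O(\rho^3)$. The denominator satisfies $\rho f(\rho) = f'(0)\rho^2 + o(\rho^2)$, and since $f'(0) > 0$ by \eqref{eq:f}, it is nonzero to leading order. Dividing and passing to the limit $\rho \to 0^+$ produces
\[\d'(0) = \lim_{\rho\to 0^+}\d'(\rho) = \frac{f''(0)a^2 + 3 f'(0)a}{f'(0)}.\]
Setting the left-hand side equal to $a$ and simplifying yields $a\bigl(f''(0)a + 2f'(0)\bigr) = 0$, which gives the two values in \eqref{eq:dprime0}; note that $f''(0) < 0$ by \eqref{eq:f} with $\rhoc > 0$, so the second root is well defined.

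The main subtlety, rather than a deep obstacle, is the careful bookkeeping of the $o(\rho^2)$ remainders to ensure that the indeterminate limit really does evaluate to the stated expression; this only requires $\d$ to be differentiable once at $0$, not $C^1$ on a neighborhood, so I would take care to phrase the matching argument using the definition of $\d'(0)$ as a limit rather than through a higher-order expansion of $\d$.
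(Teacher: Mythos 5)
Your proposal is correct and follows essentially the same route as the paper: pass to the limit $\rho\to 0^+$ in the trajectory ODE \eqref{eq:trajode}, use $\d(\rho)=\d'(0)\rho+o(\rho)$ together with the Taylor expansion of $f$ to resolve the $0/0$ form, and obtain the quadratic relation $\d'(0)=\frac{f''(0)}{f'(0)}\d'(0)^2+3\d'(0)$, whose roots are exactly the two values in \eqref{eq:dprime0}. The subtlety you flag about identifying $\d'(0)$ with $\lim_{\rho\to0^+}\d'(\rho)$ is handled the same (implicit) way in the paper, and is justified here since the right-hand side of the ODE has a limit as $\rho\to0^+$.
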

\begin{proof}
  We apply \eqref{eq:trajode} and take $\rho\to0$
\begin{align*}
\d'(0) &= \lim_{\rho\to 0_+} \frac{f''(\rho)\d(\rho)^2+(f(\rho)+2 \rho f'(\rho))\d(\rho)+\rho^2f(\rho) }{\rho f(\rho)}\\
&=\lim_{\rho\to 0+} \frac{\rho f''(\rho)}{f(\rho)}\cdot \d'(0)^2+
\lim_{\rho\to 0+} \frac{2 \rho f'(\rho)+f(\rho)}{f(\rho)}\cdot \d'(0)
= \frac{f''(0)}{f'(0)}\cdot \d'(0)^2+3\d'(0).
\end{align*}
This directly leads to \eqref{eq:dprime0}.
\end{proof}

To simplify the notation, we denote
\[\beta = -\frac{2f'(0)}{f''(0)}\]
for the rest of the section. Note that $\beta>0$.

Among these trajectories, there is only one such that
$\d'(0)=\beta$. This is the trajectory $\sigma$ that
we seek for. The following theorem ensures a uniquely defined
threshold curve $\sigma$. The idea of the proof follows from
\cite[Proposition 3.1]{lee2019sharp}.

\begin{theorem}\label{thm:sigma}
There exists a unique trajectory represented by $\sigma$ that satisfies the equation
\eqref{eq:trajode}, namely
\begin{subequations}\label{eqs:sigmaode}
\begin{equation}\label{eq:sigmaode}
     \sigma'(\rho) = \frac{f''(\rho)\sigma(\rho)^2+(f(\rho)+2\rho f'(\rho))\sigma(\rho)+\rho^2f(\rho)}{\rho f(\rho)},
\end{equation}
with initial conditions
\begin{equation}\label{eq:sigmaodeinit}
  \sigma(0) = 0,\quad\text{and}\quad  \sigma'(0) = \beta.
\end{equation}
\end{subequations}
\end{theorem}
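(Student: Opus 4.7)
The main obstacle in applying the standard Picard-Lindel\"of theorem to \eqref{eqs:sigmaode} is the singularity at $\rho=0$: the denominator $\rho f(\rho)$ vanishes there, and one simultaneously prescribes both a value and a slope of $\sigma$. My plan is to desingularize the equation near the origin via a change of unknown, invoke the classical Briot-Bouquet type theory for singular ODEs to obtain local existence and uniqueness, and then extend the trajectory by standard theory on the regular regime $\rho\in(0,\rhoc)$ where $\rho f(\rho)>0$.

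To desingularize, I substitute $\sigma(\rho)=\rho\,p(\rho)$ with $p(0)\coloneqq\beta$; this collapses the two conditions in \eqref{eq:sigmaodeinit} into the single condition $p(0)=\beta$. A direct computation (using $\sigma'=p+\rho p'$) rewrites \eqref{eq:sigmaode} as
\[p'(\rho)=\frac{N(\rho,p)}{f(\rho)}+1,\qquad N(\rho,p)\coloneqq f''(\rho)p^2+2f'(\rho)p.\]
Setting $q=p-\beta$, multiplying by $\rho$, and using the factorization $f(\rho)=\rho\,\tilde f(\rho)$ with $\tilde f(0)=f'(0)>0$, I would show that the equation takes the form
\[\rho\,q'(\rho)=F(\rho,q),\]
where $F$ is smooth in a neighborhood of $(0,0)$ and satisfies $F(0,0)=0$ and $\partial_qF(0,0)=-2$. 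The vanishing $F(0,0)=0$ crucially uses the identity $f''(0)\beta+2f'(0)=0$ defining $\beta$, which forces $N(0,\beta)=0$ and hence $N(\rho,\beta)=\rho M(\rho)$ for a smooth $M$; the eigenvalue $-2$ then comes from evaluating $\partial_pN(0,\beta)/\tilde f(0)=-2f'(0)/f'(0)$.

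This is exactly the setup for the classical Briot-Bouquet theorem: since the eigenvalue $-2$ is not a positive integer, there exists a unique (analytic) solution $q$ with $q(0)=0$ on some interval $[0,\varepsilon]$. Undoing the substitutions produces a unique $\sigma(\rho)=\rho(\beta+q(\rho))$ satisfying \eqref{eqs:sigmaode} on $[0,\varepsilon]$. For $\rho\in[\varepsilon,\rhoc)$ the right-hand side of \eqref{eq:sigmaode} is locally Lipschitz in $\sigma$, so Picard-Lindel\"of extends $\sigma$ uniquely to a maximal interval of existence. Uniqueness of the full trajectory is reinforced by Proposition \ref{prop:initiald}, which confines any trajectory through $(0,0)$ to $\sigma'(0)\in\{0,\beta\}$; only $\beta$ is compatible with \eqref{eq:sigmaodeinit}.

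I expect the main technical step to be the rigorous verification that $F$ in the Briot-Bouquet form is indeed smooth with $F(0,0)=0$ and $\partial_qF(0,0)=-2$. This reduces to an elementary but bookkeeping-heavy Taylor expansion of $N(\rho,\beta+q)/\tilde f(\rho)$, relying on the hypotheses \eqref{eq:f}, the defining identity for $\beta$, and the factorization $f(\rho)=\rho\,\tilde f(\rho)$; once these bookkeeping identities are in place, all the remaining analytic work is subsumed by the classical singular-ODE theorem.
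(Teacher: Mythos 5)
Your computation checks out: with $\sigma=\rho p$ one indeed gets $p'=\big(f''(\rho)p^2+2f'(\rho)p\big)/f(\rho)+1$, and after setting $q=p-\beta$ and $f(\rho)=\rho\tilde f(\rho)$ the equation becomes $\rho q'=F(\rho,q)$ with $F$ smooth near the origin, $F(0,0)=N(0,\beta)/f'(0)=0$ by the defining identity $f''(0)\beta+2f'(0)=0$, and $\partial_qF(0,0)=\big(2f''(0)\beta+2f'(0)\big)/f'(0)=-2$. This is a genuinely different route from the paper, which never desingularizes: it builds Euler polygons for the original equation, traps them in the cone $0\le\sigma\le\tfrac{5\beta}{4}\rho$ to get compactness and the slope condition $\sigma'(0)=\beta$, and proves uniqueness by a direct differential inequality for the difference of two solutions. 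Your approach is shorter and conceptually cleaner (one named theorem replaces three hand arguments), while the paper's is self-contained and elementary.

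One caveat you should address explicitly. The classical Briot--Bouquet theorem is an \emph{analytic}-category statement: it requires $F$ holomorphic and yields a unique \emph{holomorphic} solution when the eigenvalue is not a positive integer. Here $f$ is only assumed $C^\infty$ on $[0,1)$, and, more importantly, the uniqueness you need is among \emph{all} trajectories with $q(\rho)\to0$ as $\rho\to0^+$, not merely among analytic ones. The condition ``$\lambda$ is not a positive integer'' is not sufficient for that stronger uniqueness: for instance $\lambda=\tfrac12$ satisfies it, yet $\rho q'=\tfrac12 q$ has the continuum of continuous solutions $q=C\rho^{1/2}$ vanishing at $0$. What saves you is that $\lambda=-2<0$, so the homogeneous modes behave like $\rho^{-2}$ and blow up at the origin; hence every solution continuous at $0$ with $q(0)=0$ must coincide with the distinguished one. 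So you should invoke (or prove, via a short Gr\"onwall argument for the difference of two solutions, essentially reproducing the paper's uniqueness step) the smooth, negative-eigenvalue version of the singular-point theorem rather than the bare analytic Briot--Bouquet statement. With that adjustment the argument is complete; the extension away from $\rho=0$ by Picard--Lindel\"of is fine and in fact works on all of $(0,1)$ where $\rho f(\rho)>0$, not only on $(0,\rhoc)$.
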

\begin{proof}
  We start with the local existence theory. Fix a small $\epsilon>0$.
  The classical Cauchy-Peano theorem does
  not apply directly near $x=0$, as the right hand-side of \eqref{eq:sigmaode}
  \[F(\rho,\sigma):=\frac{f''(\rho)\sigma^2+(f(\rho)+2\rho f'(\rho))\sigma
      +\rho^2f(\rho)}{\rho f(\rho)}\]
  is not uniformly bounded for
  $(\rho,\sigma)\in[0,\epsilon]\times[-\epsilon, \epsilon]$.
  By smallness of $\epsilon$ and smoothness of $f$, we have
  \[F(\rho,\sigma) = -\frac{2}{\beta}\left(\frac{\sigma}{\rho}\right)^2+3 \left(\frac{\sigma}{\rho}\right)+\mathcal{O}(\epsilon),\]
  for any $\rho$ inside the region
  \[A=\left\{(\rho,\sigma): 0\leq\sigma\leq \frac{5\beta}{4} \rho,~~0\leq
      \rho\leq\epsilon\right\}.\]
 We can check that if $\frac\sigma \rho\in[0,\frac{5\beta}{4}]$, then
  \[ \min\left\{\rho,\,\frac{5\beta}{8}+\mathcal{O}(\epsilon)\right\}\leq F(\rho,\sigma)\leq
    \frac{9\beta}{8}+\mathcal{O}(\epsilon).\]
  Hence, if we pick $\epsilon$ small enough, we would have
  \begin{equation}\label{eq:Fxsigmabound}
    0\leq F(\rho,\sigma)\leq \frac{5\beta}{4},\quad \forall~(\rho,\sigma)\in A.
  \end{equation}
Now, we can build a sequence of approximate solutions $\{\sigma_n(\rho)\}$ for 
$\rho\in[0,\epsilon]$.
Given $n\in\mathbb{Z}_+$, define equi-distance lattice $\{\rho_k=\frac{k\epsilon}{n}\}_{k=0}^n$.
\begin{align*}
  \text{(i).}\quad&\sigma_n(\rho)=\beta \rho,\quad \forall~\rho\in\left[0,\rho_1\right].\\
  \text{(ii).}\quad&\sigma_n(\rho)=\sigma_n(\rho_k)+
   F(\rho_k,\sigma_n(\rho_k))(\rho-\rho_k),\quad \forall~\rho\in[\rho_k,\rho_{k+1}],\quad k=1,\cdots,n-1.
\end{align*}
From \eqref{eq:Fxsigmabound}, we know $(\rho,\sigma_n(\rho))\in A$, for all
$\rho\in[0,\epsilon]$.
Hence, $\sigma_n(\rho)$ is uniformly bounded and equi-continuous in
$\rho\in[0,\epsilon]$. By the Arzela-Ascoli theorem, $\sigma_n$ converges
uniformly to $\sigma$, up to an extraction of a subsequence. And
by its construction, $\sigma$ is indeed a solution of \eqref{eq:sigmaode}.

Next, we verify the initial conditions \eqref{eq:sigmaodeinit}.
It is clear that $\sigma(0)=0$ since $\sigma_n(0)=0$ for every $n$. To
verify $\sigma'(0)=\beta$, we show the following statement: the image of
the solution $(\rho,\sigma(\rho))$ lies inside the cone
  \[\{(\rho,\sigma): (1-\delta)\beta \rho\leq\sigma\leq (1+\delta)\beta \rho,~~0\leq \rho\leq\epsilon\}.\]
Indeed, we check $F$ at the boundary of the cone
\begin{align}
  F\big(\rho,\sigma=(1-\delta)\beta \rho\big)=&~\beta(1+\delta-2\delta^2)+\mathcal{O}(\epsilon)>\beta,\label{eq:Fconelb}\\
 F\big(\rho,\sigma=(1+\delta)\beta \rho\big)=&~\beta(1-\delta-2\delta^2)+\mathcal{O}(\epsilon)<\beta,\nonumber
\end{align}
where the inequalities can be obtained by choosing
$\delta=\sqrt{\epsilon}$ and let $\epsilon$ small enough.
Therefore, $\sigma'(\rho)\in[(1-\delta)\beta, (1+\delta)\beta]$ for all
$\rho\in[0,\epsilon]$. Take $\epsilon\to0$, we conclude with $\sigma'(0)=\beta$.

Finally, we discuss the local uniqueness. Let $\sigma^{(1)}$ and
$\sigma^{(2)}$ be two different solutions of \eqref{eqs:sigmaode}.
Fix a small $\epsilon$. From \eqref{eq:Fconelb} we know that
$\sigma^{(i)}(\rho)\geq (1-\delta)\beta \rho$ for $i=1,2$.
Let $w=\sigma^{(1)}-\sigma^{(2)}$.
Note that $\sigma^{(1)}$ and $\sigma^{(2)}$ can not cross each other for $\rho\in(0,\epsilon]$.
Without loss of generality, we may assume $w(\rho)>0$ for
$\rho\in(0,\epsilon]$. (Otherwise, switch $\sigma^{(1)}$ and $\sigma^{(2)}$).
Compute
\begin{align*}
  w'(\rho)=&
          ~\frac{f''(\rho)(\sigma^{(1)}(\rho)+\sigma^{(2)}(\rho))+(f(\rho)+2\rho f'(\rho))}{\rho f(\rho)}w(\rho)\\
  \leq&~ \frac{f''(0)\cdot 2(1-\delta)\beta
        \rho+3f'(0)\rho+\mathcal{O}(\rho^2)}{\rho f(\rho)}w(\rho)=\frac{-f'(0) \rho+\mathcal{O}(\delta
        \rho)}{\rho f(\rho)}w(\rho)<0,
\end{align*}
for any $\rho\in(0,\epsilon]$.
Since $w(0)=0$, it implies $w(\rho)<0$ for $\rho\in(0,\epsilon]$.
This leads to a contradiction.

Once we obtain local wellposedness of $\sigma$ near $\rho=0$, global existence and
uniqueness for $\rho>0$ follows from the standard Cauchy-Lipschitz
theorem. Indeed, $F(\rho,\sigma)$ is bounded and Lipschitz in $\sigma$ as
long as $\rho\in(0,1)$ and $\sigma$ is bounded.
\end{proof}

Next, we discuss properties of the threshold function $\sigma$.

\begin{proposition}\label{prop:sigmapos}
 For any $\rho\in(0,1)$ that lies in the domain of $\sigma$, $\sigma(\rho)>0$.
\end{proposition}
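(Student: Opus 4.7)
The plan is to argue by contradiction, exploiting the fact that the right-hand side of the ODE \eqref{eq:sigmaode} has a distinguished sign along the horizontal line $\sigma=0$. We first recall that the cone-trapping argument within the proof of Theorem \ref{thm:sigma} already shows $\sigma(\rho)\ge (1-\delta)\beta\rho>0$ for all $\rho\in(0,\epsilon]$ with $\epsilon$ sufficiently small. So positivity holds near the origin; the task is to propagate it throughout the domain.

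Assume, for contradiction, that there exists some $\rho\in(0,1)$ in the domain of $\sigma$ where $\sigma$ fails to be positive. Since $\sigma$ is continuous and strictly positive on $(0,\epsilon]$, we may define
\[
\rho_*=\inf\{\rho\in(\epsilon,1)\cap\mathrm{dom}(\sigma):\sigma(\rho)\le 0\}.
\]
By continuity, $\sigma(\rho_*)=0$ and $\sigma(\rho)>0$ for all $\rho\in(0,\rho_*)$. Now I would substitute $\sigma(\rho_*)=0$ directly into \eqref{eq:sigmaode}. The two terms involving $\sigma$ in the numerator vanish, leaving
\[
\sigma'(\rho_*)=\frac{\rho_*^{\,2}f(\rho_*)}{\rho_*\,f(\rho_*)}=\rho_*>0,
\]
where I use that $\rho_*\in(0,1)$ ensures $f(\rho_*)>0$ (indeed $f$ is strictly positive on $(0,1)$ by \eqref{eq:f} together with $f(0)=f(1)=0$ and concavity/convexity structure).

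But $\sigma'(\rho_*)>0$ means $\sigma$ is strictly increasing through zero at $\rho_*$, which forces $\sigma(\rho)<0$ for $\rho$ slightly less than $\rho_*$, contradicting the definition of $\rho_*$ as an infimum. Hence no such $\rho_*$ exists, and $\sigma(\rho)>0$ throughout $(0,1)\cap\mathrm{dom}(\sigma)$. I do not anticipate any real obstacle; the only mild subtlety is the verification that $f(\rho_*)>0$ (so the ODE evaluation makes sense and yields a positive value), but this is immediate from the hypotheses \eqref{eq:f}.
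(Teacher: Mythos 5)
Your proof is correct and is essentially the paper's own argument: both locate the first zero of $\sigma$ in $(0,1)$, evaluate the ODE \eqref{eq:sigmaode} there to get $\sigma'=\rho>0$, and derive a contradiction with $\sigma$ being positive immediately to the left. The extra details you supply (positivity near $\rho=0$ from the cone argument, and $f>0$ on $(0,1)$ so the evaluation is legitimate) are consistent with what the paper uses implicitly.
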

\begin{proof}
  Suppose the argument is false. Then there must exist 
  \[\rho_z=\min \{\rho\in(0,1) : \sigma(\rho)=0\}>0,\]
  such that $\sigma(\rho_z)$ returns to zero for the first time.
  Clearly, $\sigma(\rho)>0$ for all $\rho\in(0,\rho_z)$.
  This implies $\sigma'(\rho_z)\le0$. On the other hand, from the
  dynamics \eqref{eq:sigmaode} and $\sigma(\rho_z)=0$ we have
  $\sigma'(\rho_z)=\rho_z>0$. This leads to a contradiction.
\end{proof}
The positivity of $\sigma$ allows the subcritical regions in Theorems
\ref{thm:concave} and \ref{thm:main} to contain initial data $\rho_0$
that is not monotone decreasing. It is a major indication that the
nonlocal slowdown interaction helps to prevent shock formations for a
class of non-trivial initial data.

Generally speaking, it is possible that $\sigma$ can become
unbounded. The following Proposition describes the behaviors of $\sigma$.

\begin{proposition}\label{prop:sigmablow}
  Let $\sigma$ be the solution of \eqref{eqs:sigmaode}. Then
  exactly one of the following statement is true.
  \begin{itemize}
    \item $\sigma$ is well-defined in $[0,1]$.
    \item There exists a $\rho_*\in(0,1]$ such that
      \begin{equation}\label{eq:sigmablowup}
        \lim_{\rho\to \rho_*-}\sigma(\rho)=+\infty.
      \end{equation}
      Moreover, we have $\rho_*>\rhoc$.
  \end{itemize}
\end{proposition}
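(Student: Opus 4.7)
The idea is to combine the local existence from Theorem \ref{thm:sigma} with standard ODE continuation to establish the dichotomy, then use a Gronwall-type estimate on the concave region to locate any blowup point strictly past $\rho_c$.

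First I would invoke Theorem \ref{thm:sigma} to get $\sigma$ on some initial segment $[0,\epsilon]$, with $\sigma$ bounded and $\sigma(0)=0$. Away from the endpoints $\rho=0,1$, the right-hand side $F(\rho,\sigma)$ of \eqref{eq:sigmaode} is smooth in both arguments, so the classical Cauchy-Lipschitz continuation theorem extends $\sigma$ to a maximal interval of existence $[0,\rho_*)$ with $\rho_*\in(0,1]$. Either $\sigma$ admits a finite limit at $\rho_*$ and is extendable, yielding in the best case a solution well-defined on all of $[0,1]$, or the blowup alternative forces $\limsup_{\rho\to\rho_*^-}|\sigma(\rho)|=+\infty$. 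Proposition \ref{prop:sigmapos} guarantees $\sigma>0$ throughout its domain, so in the blowup case the one-sided limit is necessarily $+\infty$. This produces the stated dichotomy between the two options.

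For the ``moreover'' claim, I would show directly that $\sigma$ remains bounded on $[0,\rho_c]$. The key observation is that in the concave region $f''(\rho)\le 0$ while $f(\rho)>0$, so the quadratic term $\frac{f''(\rho)}{\rho f(\rho)}\sigma^2$ in $F(\rho,\sigma)$ contributes nonpositively as long as $\sigma\ge 0$. Hence on any compact subinterval $[\epsilon,\rho_c]$ the ODE satisfies the linear differential inequality
\[
  \sigma'(\rho)\le \frac{f(\rho)+2\rho f'(\rho)}{\rho f(\rho)}\,\sigma(\rho)+\rho \le C_1(\epsilon)\,\sigma(\rho)+1,
\]
where $C_1(\epsilon)$ is the finite supremum of $(f+2\rho f')/(\rho f)$ on $[\epsilon,\rho_c]$. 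Gronwall's inequality then bounds $\sigma$ uniformly on $[\epsilon,\rho_c]$ in terms of $\sigma(\epsilon)$; combined with boundedness on $[0,\epsilon]$ from Theorem \ref{thm:sigma}, this rules out blowup in $[0,\rho_c]$ and forces $\rho_*>\rho_c$.

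The main conceptual point is that the naively dangerous quadratic term $f''\sigma^2/(\rho f)$ is exactly what drives blowup in the convex region, but on $[0,\rho_c]$ its sign flips and it becomes a dissipative aid rather than an obstacle. The vanishing of $f''$ at $\rho=\rho_c$ merely neutralizes this favorable term without introducing any new source of growth, so the linear inequality above carries through all the way up to $\rho=\rho_c$.
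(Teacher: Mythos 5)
Your proposal is correct and follows essentially the same route as the paper: local existence near $\rho=0$ from Theorem \ref{thm:sigma}, Cauchy--Lipschitz continuation plus the positivity of $\sigma$ (Proposition \ref{prop:sigmapos}) to reduce the dichotomy to upward blowup, and then discarding the nonpositive quadratic term $f''\sigma^2/(\rho f)$ on the concave region to get the linear inequality $\sigma'\le M\sigma+1$ on $[\epsilon,\rho_c]$ and a Gr\"onwall bound ruling out blowup before $\rho_c$. The only cosmetic difference is that the paper phrases the dichotomy by directly negating \eqref{eq:sigmablowup} rather than via the maximal-interval formulation, which changes nothing of substance.
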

\begin{proof}
Suppose \eqref{eq:sigmablowup} does not hold, namely $\sigma$ is
bounded from above in $[0,1]$. Together with Proposition
\eqref{prop:sigmapos}, we know $\sigma$ is bounded. Hence, Theorem
\ref{thm:sigma} implies the existence and uniqueness of $\sigma$ in
$[0,1]$.

We are left to show that $\rho_*>\rhoc$, namely blowup cannot happen
before $\rhoc$. To this end, we observe that $f''(\rho)\leq0$ for all
$\rho\in[0,\rhoc]$. We can estimate from \eqref{eq:sigmaode} that
\[\sigma'(\rho)\leq 0\cdot\sigma(\rho)^2+M\sigma(\rho)+1,\quad
  \text{where}\quad
  M=\max_{\rho\in[\epsilon,\rhoc]}\frac{f(\rho)+2\rho f'(\rho)}{\rho
    f(\rho)}<+\infty,\]
for any $\rho\in[\epsilon,\rhoc]$. This implies the upper bound
\[\sigma(\rho)\leq\left(\sigma(\epsilon)+\frac{1}{M}\right)e^{M\rho},\quad
  \forall~\rho\in[\epsilon,\rhoc].\]
Therefore, the blowup cannot happen when $\rho\leq\rhoc$.
\end{proof}

When the flux $f$ is concave, namely $\rhoc=1$, the second statement
in Proposition \ref{prop:sigmablow} won't hold. Hence, $\sigma$ is
well-defined in $[0,1]$.
When $f$ switches from concave to convex at $\rhoc<1$, one can not
guarantee that $\sigma$ won't blow up. 
However, for the particular $f_J$ in \eqref{eq:fluxfJ} of our concern, $\sigma_J$ is
well-defined in $[0,1]$, even if $J>1$.
Moreover, we find the explicit expression of the threshold function $\sigma_J$.
\begin{proposition}
  Let $f(\rho)=f_J(\rho)=\rho(1-\rho)^J$ for $J>0$. Then the
  trajectory $\sigma_J$ in \eqref{eq:sigmaode} can be explicitly
  expressed by
  \begin{equation}\label{eq:sigmasol}
    \sigma_J(\rho) = \frac{\rho(1-\rho)}{J}.
  \end{equation}
\end{proposition}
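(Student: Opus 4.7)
The plan is a direct verification. Since Theorem~\ref{thm:sigma} asserts uniqueness of the solution to \eqref{eqs:sigmaode} once local existence is in hand, it suffices to show that the explicit candidate $\sigma_J(\rho)=\rho(1-\rho)/J$ satisfies both the ODE \eqref{eq:sigmaode} and the initial data \eqref{eq:sigmaodeinit} for $f=f_J$.

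First I would compute the derivatives of $f_J$. A short calculation gives
\[
f_J'(\rho)=(1-\rho)^{J-1}\bigl(1-(J+1)\rho\bigr),\qquad
f_J''(\rho)=J(1-\rho)^{J-2}\bigl((J+1)\rho-2\bigr),
\]
from which $f_J'(0)=1$ and $f_J''(0)=-2J$, so that the parameter $\beta=-2f'(0)/f''(0)$ introduced before Theorem~\ref{thm:sigma} equals $1/J$. The candidate $\sigma_J$ clearly satisfies $\sigma_J(0)=0$ and $\sigma_J'(0)=1/J=\beta$, so the initial conditions \eqref{eq:sigmaodeinit} hold.

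Next I would verify the ODE by plugging $\sigma=\sigma_J$ into the right-hand side of \eqref{eq:sigmaode}. The denominator is $\rho f_J(\rho)=\rho^2(1-\rho)^J$. Each of the three terms $f_J''\sigma_J^2$, $(f_J+2\rho f_J')\sigma_J$, and $\rho^2 f_J$ in the numerator shares a common factor $\rho^2(1-\rho)^J$ after substitution; pulling out $\rho^2(1-\rho)^J/J$ reduces the numerator to a polynomial expression in $\rho$. Explicitly one finds the bracketed polynomials $(J+1)\rho-2$, $3-(2J+3)\rho$, and $J\rho$ respectively, whose sum telescopes to $1-2\rho$. Division by the denominator yields $(1-2\rho)/J$, which matches $\sigma_J'(\rho)$ computed directly from the formula. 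Thus $\sigma_J$ solves \eqref{eqs:sigmaode}.

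Finally, since $\sigma_J(\rho)=\rho(1-\rho)/J$ is bounded on $[0,1]$, the solution is global, and the uniqueness clause in Theorem~\ref{thm:sigma} identifies it as the threshold function. The only nontrivial step is the algebraic simplification of the numerator, but the fortunate cancellation of the $\rho$ coefficients (verifying that $(J+1)-(2J+3)+J=-2$) is the essential observation; everything else is bookkeeping.
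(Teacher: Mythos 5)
Your proposal is correct and follows essentially the same route as the paper: direct substitution of $\sigma_J(\rho)=\rho(1-\rho)/J$ into \eqref{eq:sigmaode}, reduction of the numerator to $(J+1)\rho-2$, $3-(2J+3)\rho$, and $J\rho$ summing to $1-2\rho$, and verification of the initial conditions $\sigma_J(0)=0$, $\sigma_J'(0)=1/J=\beta$, with uniqueness supplied by Theorem~\ref{thm:sigma}. The algebra checks out, so no changes are needed.
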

\begin{proof}
  We verify that $\sigma_J$ solves \eqref{eqs:sigmaode}.
  For equation \eqref{eq:sigmaode}, we plug in $f_J$ and $\sigma_J$ to the right
  hand side and get
  \begin{align*}
    &\,\frac{f_J''(\rho)\sigma_J(\rho)^2+(f_J(\rho)+2\rho f_J'(\rho))\sigma_J(\rho)+\rho^2f_J(\rho)}{\rho f_J(\rho)}\\
    =&\,\frac{J(1-\rho)^{J-2}(-2(1-\rho)+(J-1)\rho)\cdot
       J^{-2}\rho^2(1-\rho)^2}{\rho^2(1-\rho)^J}\\
      &\,+\frac{\rho(1-\rho)^{J-1}(3(1-\rho)-2J\rho)\cdot J^{-1}\rho(1-\rho)}{\rho^2(1-\rho)^J}+\rho\\
    =&\,\frac{-2(1-\rho)+(J-1)\rho}{J}+\frac{3(1-\rho)-2J\rho}{J}+\rho=\frac{1-2\rho}{J}=\sigma_J'(\rho).
  \end{align*}
  For the initial conditions \eqref{eq:sigmaodeinit}, we can easily
  verify that $\sigma_J(0)=0$, $\sigma_J'(0)=\frac1J$ and $\beta_J=-\frac{2f_J'(0)}{f_J''(0)}=\frac1J$.
\end{proof}

\subsection{The threshold function $\gamma$}
Next, we describe the construction of the other threshold function
$\gamma$, when the flux switches from concave to convex at $\rhoc<1$.

\begin{theorem}\label{thm:gamma}
There exists a unique trajectory represented by $\gamma$ that satisfies the equation
\eqref{eq:trajode}, namely
\begin{subequations}\label{eqs:gammaode}
\begin{equation}\label{eq:gammaode}
     \gamma'(\rho) = \frac{f''(\rho)\gamma(\rho)^2+(f(\rho)+2\rho f'(\rho))\gamma(\rho)+\rho^2f(\rho)}{\rho f(\rho)},
\end{equation}
with initial condition
\begin{equation}\label{eq:gammaodeinit}
  \lim_{\rho\to\rhoc+}\gamma(\rho) =-\infty. 
\end{equation}
\end{subequations}
\end{theorem}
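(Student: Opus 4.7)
The overall approach parallels Theorem \ref{thm:sigma}, but now the condition $\gamma(\rhoc+)=-\infty$ is not a classical initial value. My plan is to regularize via the reciprocal substitution $w(\rho) := -1/\gamma(\rho)$, so that $w \to 0^+$ as $\rho \to \rhoc+$. Substituting into \eqref{eq:gammaode} and using the chain rule yields the equivalent initial value problem
\begin{equation*}
  w'(\rho) = \frac{f''(\rho) - (f(\rho)+2\rho f'(\rho))\,w + \rho^2 f(\rho)\,w^2}{\rho f(\rho)} =: H(\rho,w), \qquad w(\rhoc) = 0.
\end{equation*}
The defining property $f''(\rhoc)=0$ cancels the would-be singularity in $H(\rho,0)$ at $\rhoc$, and since $\rhoc f(\rhoc)>0$, the map $H$ is smooth, hence in particular Lipschitz in $w$, on a neighborhood of $(\rhoc,0)$.

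I would then apply the classical Picard--Lindel\"of theorem to produce a unique $C^1$ solution $w$ on some $[\rhoc,\rhoc+\epsilon]$; setting $\gamma := -1/w$ recovers a solution of \eqref{eq:gammaode} wherever $w>0$, and its local uniqueness is inherited from that of $w$. To verify both \eqref{eq:gammaodeinit} and the positivity of $w$ on $(\rhoc,\rhoc+\epsilon]$, I would differentiate to obtain
\begin{equation*}
  w'(\rhoc) = H(\rhoc,0) = 0, \qquad w''(\rhoc) = H_\rho(\rhoc,0) = \frac{f'''(\rhoc)}{\rhoc f(\rhoc)} \geq 0,
\end{equation*}
where nonnegativity is forced by the sign transition of $f''$ at $\rhoc$. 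In the generic case $f'''(\rhoc)>0$ (which holds for every $f_J$ in \eqref{eq:fluxfJ}), a Taylor expansion gives $w(\rho) \sim \tfrac{f'''(\rhoc)}{2\rhoc f(\rhoc)}(\rho-\rhoc)^2 > 0$, so that $\gamma(\rho) = -1/w(\rho) \to -\infty$ as $\rho \to \rhoc+$; in degenerate cases the first non-vanishing higher derivative of $f''$ at $\rhoc$ plays the same role.

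Once $\gamma$ is uniquely constructed on a right-neighborhood of $\rhoc$, extension to the maximal interval $(\rhoc,\rho_*]$ follows from Cauchy--Lipschitz applied directly to \eqref{eq:gammaode}, exactly as in the closing paragraph of the proof of Theorem \ref{thm:sigma}: on any compact sub-interval of $(\rhoc,1)$ on which $\gamma$ stays bounded, the right-hand side is Lipschitz in $\gamma$ and smooth in $\rho$. The main obstacle is the singular initial condition; the reciprocal substitution succeeds \emph{precisely} because $f''(\rhoc)=0$, which is the structural feature that distinguishes $\rhoc$ from generic points and explains why the second threshold curve $\gamma$ must be anchored at the inflection point. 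Securing the correct one-sided sign of the recovered $\gamma$ then reduces to tracking the sign of $f'''(\rhoc)$, or the first higher non-vanishing derivative of $f''$ at $\rhoc$ in the degenerate case.
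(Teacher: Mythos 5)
Your proposal is correct and follows essentially the same route as the paper, which also regularizes via the reciprocal substitution ($\eta=1/\gamma$ there, your $w=-\eta$), applies Picard--Lindel\"of to the resulting initial value problem at $\rhoc$, and uses the Taylor expansion together with $f''(\rhoc)=0$ and $f'''(\rhoc)\ge 0$ (or the first non-vanishing higher derivative) to secure the correct one-sided sign and hence $\gamma(\rhoc+)=-\infty$. One small inaccuracy: there is no singularity in $H(\rho,0)$ at $\rhoc$ to be cancelled, since the denominator $\rhoc f(\rhoc)$ is already positive; the true role of $f''(\rhoc)=0$ is to force $w'(\rhoc)=0$ so that the sign of $w$ on $(\rhoc,\rhoc+\epsilon]$ is decided by the higher-order terms, which is exactly how your computation then uses it.
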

\begin{proof}
  Let us first construct $\gamma$ locally in $(\rhoc, \rhoc+\epsilon)$, for a
  sufficiently small $\epsilon>0$. $\gamma$ can be defined via
  $\eta=\frac{1}{\gamma}$. Indeed, as $\gamma$ satisfies
  \eqref{eq:gammaode}, we must have
  \begin{subequations}\label{eqs:etaode}
  \begin{equation}\label{eq:etaode}
    \eta'(\rho)=-\frac{\gamma'(\rho)}{\gamma(\rho)^2}=\frac{-f''(\rho)-(f(\rho)+2\rho
      f'(\rho))\eta(\rho)-\rho^2f(\rho)\eta(\rho)^2}{\rho f(\rho)}.
  \end{equation}
  Then, equation \eqref{eq:etaode} with initial condition
  \begin{equation}\label{eq:etaodeinit}
    \eta(\rhoc)=0
\end{equation}
\end{subequations}
is locally wellposed in $[\rhoc,\rhoc+\epsilon]$.
We claim that $\gamma(\rho)=\frac{1}{\eta(\rho)}$ satisfies
\eqref{eqs:gammaode} for $\rho\in(\rhoc,\rhoc+\epsilon]$.
It suffices to show that $\eta(\rhoc+)<0$. To this end, take Taylor
expansion of $\eta$ around $\rhoc$
\[\eta(\rho)=\sum_{n=0}^\infty\frac{\eta^{(n)}(\rhoc)}{n!}(\rho-\rhoc)^n.\]
The first term of the series is zero due to the initial condition
\eqref{eq:etaodeinit}.
For the second term, observe from the assumption of $f$ in
\eqref{eq:f} that $f''(\rhoc)=0$. Then from \eqref{eqs:etaode} we get
$\eta'(\rhoc)=0$.
We continue to calculate the next term
\[\eta''(\rhoc)=-\frac{f'''(\rhoc)}{\rhoc f(\rhoc)}.\]
Since $f$ switches from concave to convex at $\rho=\rhoc$, we have
$f'''(\rhoc)\ge0$. If the strict inequality holds, we have
$\eta''(\rhoc)<0$, which yields $\eta(\rhoc+)<0$. If $f'''(\rhoc)=0$,
we can continue to the next terms in the Taylor expansion until we
have $f^{(n)}(\rhoc)>0$ for some $n$.
Note that such finite $n$ exists, as otherwise $f$ is linear around
$\rhoc$, violating the strict convexity assumption in \eqref{eq:f}.
Then
\[\eta(\rhoc)=\eta'(\rhoc)=\cdots=\eta^{(n-2)}(\rhoc)=0,\quad
  \eta^{(n-1)}(\rhoc)=-\frac{f^{(n)}(\rhoc)}{\rhoc f(\rhoc)}<0,\]
which also leads to $\eta(\rhoc+)<0$.
Note that the $\eta$ defined in \eqref{eqs:etaode} is unique. Hence,
$\gamma$ can also be uniquely defined in $(\rhoc, \rhoc+\epsilon]$ by
$\gamma(\rho)=\frac{1}{\eta(\rho)}$.

Starting from $\rho=\rhoc+\epsilon$, we can consider the dynamics
\eqref{eq:gammaode} with initial condition $\gamma(\rhoc+\epsilon)=
\frac{1}{\eta(\rhoc+\epsilon)}$. From the Cauchy-Lipschitz theorem, 
$\gamma$ exists and is unique in $\rho\in[\rhoc+\epsilon, 1)$ as long
as $\gamma$ is bounded.

We now show $\gamma$ is lower bounded when $\rho\ge\rhoc+\epsilon$.
Let us start with an estimate on $f(\rho)+2\rho f'(\rho)$. Applying
convexity of $f$, we have
\[f(\rho)=f(1)-f'(\rho)(1-\rho)-\frac{f''(\xi)}{2}(1-\rho)^2\le -f'(\rho)(1-\rho).\]
Here $\xi\in[\rho,1]$ and $f''(\xi)\ge0$.
Since $f'(\rho)<0$ for $\rho\in(\rhoc, 1)$, we obtain
\[f(\rho)+2\rho f'(\rho)\leq f'(\rho)(3\rho-1)<0,\quad\text{if}\quad \rho>\tfrac13.\]
Then from \eqref{eq:gammaode} it is easy to verify that
$\gamma'(\rho)>0$ as long as $\gamma(\rho)\le0$.
Therefore, it is not possible that $\gamma(\rho)\to-\infty$ for any
$\rho>\max\{\rho_c+\epsilon,\frac13\}$.
It remains to show that $\gamma(\rho)$ is lower bounded when
$\rho\in[\rhoc+\epsilon, \frac13]$, in the case
$\rhoc+\epsilon<\frac13$.
From strict convexity in \eqref{eq:f}, we obtain a uniform bound
\[\sup_{\rho\in\left[\rhoc+\epsilon, \,\frac13\right]}\frac{f(\rho)}{f''(\rho)}< M,\]
where $M$ depends on $\epsilon$. Now, if $\gamma(\rho)<-M$ we apply
\eqref{eq:gammaode} and get 
\[\gamma'(\rho)=\frac{f''(\rho)\gamma(\rho)+f(\rho)}{\rho f(\rho)}\gamma(\rho)+
\frac{2f'(\rho)\gamma(\rho)}{f(\rho)}+\rho>0,\]
as all three terms above are positive. We conclude with a lower bound
of $\gamma$
\[\gamma(\rho)\geq \min\{-M, \gamma(\rhoc+\epsilon)\}.\]

For the upper bound, since $\gamma$ and $\sigma$ are two trajectories
satisfying the same ODE, we have the bound
\[\gamma(\rho)<\sigma(\rho).\]
Therefore, if $\sigma$ is bounded, so is $\gamma$. On the other
hand, if $\sigma$ becomes unbounded as in \eqref{eq:sigmablowup}, it
is possible that $\gamma$ also becomes unbounded, namely there exists
$\rho^*\in(\rho_*,1)$ 
\begin{equation}\label{eq:gammablowup}
  \lim_{\rho\to \rho^*-}\gamma(\rho)=+\infty.
\end{equation}
\end{proof}

\begin{proposition}
  Let $f(\rho)=f_J(\rho)=\rho(1-\rho)^J$ for $J>1$. Then the
  trajectory $\gamma_J$ in \eqref{eq:sigmaode} can be explicitly
  expressed by
  \begin{equation}\label{eq:sigmasol}
    \gamma_J(\rho) = \frac{\rho^2(1-\rho)\left(\rho-\frac{4J}{(J+1)^2}\right)}{J\left(\rho-\frac{2}{J+1}\right)^2}.
  \end{equation}
\end{proposition}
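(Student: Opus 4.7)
The plan mirrors the argument used for $\sigma_J$ in the previous proposition: verify that the proposed rational expression solves the ODE \eqref{eq:gammaode} and satisfies the asymptotic condition \eqref{eq:gammaodeinit}, then invoke the uniqueness part of Theorem \ref{thm:gamma} to identify it with $\gamma_J$.

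For the boundary condition, I would evaluate the numerator at $\rho = \rhoc = 2/(J+1)$. A short calculation gives $\rhoc - 4J/(J+1)^2 = -2(J-1)/(J+1)^2$, which is strictly negative for $J > 1$, while $\rhoc^2(1-\rhoc)$ is strictly positive. The denominator $J(\rho - \rhoc)^2$ vanishes to second order with positive leading coefficient, so the candidate tends to $-\infty$ as $\rho \to \rhoc+$, matching \eqref{eq:gammaodeinit}.

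For the ODE verification I would exploit the nice closed forms $f_J'(\rho) = (1-\rho)^{J-1}(1 - (J+1)\rho)$ and $f_J''(\rho) = J(J+1)(1-\rho)^{J-2}(\rho - \rhoc)$. The key structural observation driving the cancellations is that $f_J''$ already carries the factor $(\rho - \rhoc)$, which neutralizes one power of the pole of $\gamma_J^2$ at $\rhoc$. After substituting $\gamma_J$ into the right-hand side of \eqref{eq:gammaode} and clearing denominators, the claim becomes a polynomial identity matched against $\gamma_J'(\rho)$ computed via the quotient rule. A cleaner alternative is to pass to the reciprocal $\eta_J = 1/\gamma_J = J(\rho - \rhoc)^2 / [\rho^2(1-\rho)(\rho - 4J/(J+1)^2)]$, which is smooth near $\rhoc$ with $\eta_J(\rhoc) = 0$, and verify the $\eta$-equation \eqref{eq:etaode} directly, thereby bypassing the singularity entirely.

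The main obstacle is purely bookkeeping: the identity involves rational functions in $\rho$ and $(1-\rho)$ with a common denominator containing high powers of $(\rho - \rhoc)$ and $(1-\rho)$, and patient grouping by these factors is needed to display all cancellations. Once the ODE and asymptotic condition are confirmed, the uniqueness half of Theorem \ref{thm:gamma} forces the candidate to coincide with $\gamma_J$, completing the proof.
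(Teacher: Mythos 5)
Your proposal is correct and follows essentially the same route as the paper: verify the asymptote at $\rhoc$ from the sign of $\rhoc-\frac{4J}{(J+1)^2}$, compute $\gamma_J'$ by the quotient rule, and match it against the right-hand side of \eqref{eq:gammaode} after substituting $f_J$ and $\gamma_J$, with uniqueness from Theorem \ref{thm:gamma} closing the argument. The reciprocal trick via $\eta_J$ that you mention as an alternative is a reasonable simplification but is not needed, and the paper does not use it.
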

\begin{proof}
  First, we calculate
  \[f''(\rho) = J\big((J+1)\rho-2\big)(1-\rho)^{J-2},\]
  and hence the inflection point $\rhoc=\frac{2}{J+1}$.
  Let us denote $\rho_e=\frac{4J}{(J+1)^2}$.
  Since $\rhoc<\rho_e$, it is easy to verify the
  condition \eqref{eq:gammaodeinit}.
  Now, we verify the equation \eqref{eq:gammaode}.
  Differentiate \eqref{eq:sigmasol} and get
  \begin{equation}\label{eq:gammaJprime}
    \gamma_J'(\rho)=\frac{\rho}{J(\rho-\rhoc)^3}\cdot\Big(
-2\rho^3+(\rho_e+4\rhoc+1)\rho^2-3(\rho_e+1)\rhoc \rho+2\rho_e\rhoc\Big).
\end{equation}
  Plug in $f_J$ and
  $\gamma_J$ to the right hand side of \eqref{eq:gammaode} and get
  \begin{align*}
    &\,\frac{f_J''(\rho)\gamma_J(\rho)^2+(f_J(\rho)+2\rho f_J'(\rho))\gamma_J(\rho)+\rho^2f_J(\rho)}{\rho f_J(\rho)}\\
    =&\,J(J+1)(\rho-\rhoc)(1-\rho)^{J-2}\cdot\frac{\rho^4(1-\rho)^2(\rho-\rho_e)^2}{J^2(\rho-\rhoc)^4}\cdot
    \frac{1}{\rho^2(1-\rho)^J}\\
      &\,+\rho(1-\rho)^{J-1}(3(1-\rho)-2J\rho)\cdot\frac{\rho^2(1-\rho)(\rho-\rho_e)}{J(\rho-\rhoc)^2}\cdot
    \frac{1}{\rho^2(1-\rho)^J}+\rho\\
    =&\,\frac{(J+1)\rho^2(\rho-\rho_e)^2+\rho(3(1-\rho)-2J\rho)(\rho-\rho_e)(\rho-\rhoc)+J\rho(\rho-\rhoc)^3}{J(\rho-\rhoc)^3}\\
    =&\,\frac{\rho}{J(\rho-\rhoc)^3}\cdot\Big[
       -2\rho^3+(\rho_e+(3-J)\rhoc+3)\rho^2\Big.\\
    &\qquad\qquad\Big.
      +\big(\rho_e((J+1)\rho_e-2J\rhoc)-3(\rho_e\rhoc+\rhoc+\rho_e-J\rhoc^2)\big)\rho
      +(3\rho_e-J\rhoc^2)\rhoc
       \Big].  \end{align*}
 Using the definitions of $\rhoc$ and $\rho_e$, we see that the
 expression matches with \eqref{eq:gammaJprime}. We conclude that $\gamma_J$
 satisfies \eqref{eq:gammaode}.
\end{proof}

\section{Global behaviors of solutions}\label{sec:gwp}

In this section, we study the dynamics \eqref{eq:drho} with initial
conditions
\begin{equation}\label{eq:drhoinit}
  \rho(t=0)=\rho_0\in[0,\rhoM], \quad d(t=0)=d_0.
\end{equation}
We argue that when $(\rho_0,d_0)$ lie in different regions in the
phase plane separated by the threshold functions $\sigma$ and
$\gamma$, the global behaviors of the dynamics vary.

\begin{theorem}\label{thm:drho}
  Consider the system \eqref{eq:drho} with initial data $(\rho_0,d_0)$
  as in \eqref{eq:drhoinit}. Then,
  \begin{itemize}
    \item[(a).] If $(\rho_0, d_0)$ lies in the type I supercritical region,
      that is
      \begin{equation}\label{eq:super1}
        d_0>\sigma(\rho_0),
      \end{equation} 
      then there exists a finite time $t_*$, such that
      \[\lim_{t\to t_*-}d(t)=+\infty.\]
    \item[(b).] If $(\rho_0, d_0)$ lies in the type II supercritical region,
      that is
      \begin{equation}\label{eq:super2}
        \rho_0>\rhoc \quad\text{and}\quad d_0\le\gamma(\rho_0),
      \end{equation}
      then there exists a finite time $t_*$, such that
      \[\lim_{t\to t_*-}d(t)=-\infty.\]
    \item[(c).] If $(\rho_0, d_0)$ lies in the subcritical region, meaning
      neither of the two supercritical regions, that is
      \begin{equation}\label{eq:sub1}
        \rho_0\le\rhoc\quad\text{and}\quad d_0\le\sigma(\rho_0),
      \end{equation}
      or
      \begin{equation}\label{eq:sub2}
        \rho_0>\rhoc\quad\text{and}\quad \gamma(\rho_0)<d_0\le\sigma(\rho_0),
      \end{equation}
      then the solution $(\rho,d)$ exists in all
      time. Moreover,
      \[\lim_{t\to\infty}\rho(t)=0,\quad \lim_{t\to\infty}d(t)=0.\]
  \end{itemize}
\end{theorem}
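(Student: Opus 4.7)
The plan is to exploit the decoupling already recorded in \eqref{eq:drho}: although the factor $e^{-\rhob}$ is nonlocal, it appears as a common positive multiplier in both $\dot\rho$ and $\dot d$, so the shape of the trajectory in the $(\rho,d)$ phase plane is governed by the autonomous ODE \eqref{eq:trajode}, while the speed along the trajectory is controlled by the uniform bound $e^{-\rhob}\in[e^{-\KM m},1]$. I would therefore split the proof into a geometric step (where in the phase plane does $d=\d(\rho)$ go as $\rho$ decreases from $\rho_0$?) and a time step (how long does that take?), based on $dt=-\frac{d\rho}{\rho f(\rho)e^{-\rhob}}$.

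For case (a), Cauchy-Lipschitz uniqueness for \eqref{eq:trajode} away from $\rho=0$, together with the fact that $\sigma$ is itself a trajectory, forces $\d(\rho)>\sigma(\rho)$ throughout the common domain. I then claim $\d$ must blow up to $+\infty$ at some $\rho_{**}>0$. Indeed, if $\d$ were bounded on $(0,\rho_0]$, the dominant singular term $\frac{f''(0)\d^2}{f'(0)\rho^2}$ in \eqref{eq:trajode} would force $\d(0+)=0$ (otherwise $\d'$ is not integrable near $0$), and Proposition \ref{prop:initiald} would give $\d'(0+)\in\{0,\beta\}$; either choice contradicts strict positioning above $\sigma$, since the slope-$\beta$ alternative coincides with $\sigma$ by the local uniqueness part of Theorem \ref{thm:sigma}, while the slope-$0$ alternative lies below $\sigma$ near the origin. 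Finite time blowup then follows from
\[
t_*=\int_{\rho_{**}}^{\rho_0}\frac{d\tilde\rho}{\tilde\rho f(\tilde\rho)\,e^{-\rhob}}\leq e^{\KM m}\int_{\rho_{**}}^{\rho_0}\frac{d\tilde\rho}{\tilde\rho f(\tilde\rho)}<\infty.
\]
Case (b) is structurally analogous but easier: since $\gamma(\rho)\to-\infty$ as $\rho\to\rhoc+$, a trajectory trapped below $\gamma$ must itself reach $-\infty$ at some $\rho^{**}\ge\rhoc>0$, and the same integral estimate bounds $t_*$. For case (c), non-crossing of phase-plane trajectories keeps $\d(\rho)\le\sigma(\rho)\le\max_{[0,\rho_0]}\sigma<\infty$ and, when $\rho>\rhoc$, $\d(\rho)>\gamma(\rho)$. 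To rule out escape to $-\infty$ when $\rho\le\rhoc$, I would work directly with \eqref{eq:drho}: on any horizontal line $d=-M$ with $M$ large, concavity of $f$ on $[0,\rhoc]$ makes the term $-f''(\rho)d^2e^{-\rhob}$ dominant and positive, hence $\dot d>0$ and the line cannot be pierced downward. With $\d$ bounded on $(0,\rho_0]$, the same near-origin analysis as in case (a) forces $\d(\rho)\to 0$ as $\rho\to 0+$. Global existence and decay then follow from $\rho f(\rho)\sim f'(0)\rho^2$ near the origin, which makes $\int_0\frac{d\rho}{\rho f(\rho)}$ divergent; separating variables in $\dot\rho=-\rho f(\rho)e^{-\rhob}$ yields $\int_{\rho(t)}^{\rho_0}\frac{d\tilde\rho}{\tilde\rho f(\tilde\rho)}\ge e^{-\KM m}t$, so $\rho(t)\to 0$ only as $t\to\infty$ and $d(t)=\d(\rho(t))\to 0$ as well.

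The main obstacle I anticipate is the near-origin analysis underlying case (a) and the boundedness claim in case (c): \eqref{eq:trajode} has a genuine singularity at $\rho=0$ with quadratic nonlinearity $\d^2/\rho$, and excluding the possibility of a trajectory strictly above $\sigma$ that remains bounded as $\rho\to 0+$ requires the cone-trapping machinery used to prove Theorem \ref{thm:sigma} via the estimates \eqref{eq:Fconelb}. Concretely, one must upgrade those inequalities to show that trajectories lying above the upper boundary $(1+\delta)\beta\rho$ of the $\sigma$-cone are pushed strictly further above it as $\rho$ decreases, and consequently must exit any bounded strip in finite $\rho$-interval. This is the real technical content; once it is in hand, the rest of the trichotomy follows from uniqueness in the phase plane and the uniform bounds on $e^{-\rhob}$.
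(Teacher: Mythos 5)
Your overall strategy --- comparison with the trajectories $\sigma$ and $\gamma$ in the $(\rho,d)$ phase plane via uniqueness of \eqref{eq:trajode} away from $\rho=0$, followed by conversion of $\rho$-intervals into time through the uniform bounds on $e^{-\rhob}$ --- is the same as the paper's, and your treatments of case (b) and of the decay claims in case (c) are sound (your non-integrability argument for $\d(\rho)\to 0$ is in fact more explicit than what the paper writes). However, case (a) has a genuine gap at precisely the point you flag. Your dichotomy is ``either $\d$ is bounded on $(0,\rho_0]$ or it blows up at some $\rho_{**}>0$,'' and you only refute the first alternative. There is a third alternative: $\d$ could be defined on all of $(0,\rho_0]$ and tend to $+\infty$ only as $\rho\to 0+$. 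In that scenario your time integral becomes $\int_0^{\rho_0}\frac{d\tilde\rho}{\tilde\rho f(\tilde\rho)e^{-\rhob}}=+\infty$ (since $\rho f(\rho)\sim f'(0)\rho^2$ near the origin), so $d(t)$ would grow only as $t\to\infty$ and no finite-time blowup would follow. Excluding this requires a quantitative Riccati estimate, which is exactly what the paper supplies: after Lemma \ref{lem:dlow} yields $d(t)\ge c>0$, one chooses $\rho_1$ so small that the upper nullcline satisfies $2\d_+(\rho_1)<c$, and then \eqref{eq:ddym} gives $\dot d\ge C\big(d-\d_+(\rho_1)\big)^2$ once $\rho\le\rho_1$, forcing blowup in finite time (equivalently, at a strictly positive $\rho_{**}$). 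Your ``exit any bounded strip in finite $\rho$-interval'' remark points in the right direction, but exiting each strip $\{d\le M\}$ at some $\rho_M>0$ does not give blowup at a positive $\rho_{**}$ without a lower bound on $\rho_M$ uniform in $M$.

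A secondary gap is in case (c) for the region \eqref{eq:sub2}: the bound $\d(\rho)\le\max_{[0,\rho_0]}\sigma<\infty$ is unavailable when $\sigma$ blows up at some $\rho_*<\rho_0$ (recall Remark \ref{rmk:CTinfinity}, under which the type I condition is vacuous for $\rho_0\ge\rho_*$, so such $\rho_0$ do occur in the subcritical region). The paper instead uses that $C(\rho)<0$ on $(\rhoc,1)$ together with an upper bound on the nullcline $\d_-$ over $[\rho_*,\rho_0]$ to conclude $\dot d<0$ whenever $d$ is large, and separately verifies that $\d(\rhoc)$ is finite (otherwise $\d=\gamma$ by the uniqueness in Theorem \ref{thm:gamma}, contradicting \eqref{eq:sub2}) before handing off to the $\rho\le\rhoc$ analysis. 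You would need both of these additional steps to complete the subcritical case.
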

\begin{remark}\label{rmk:CTinfinity}
  Since $\sigma$ and $\gamma$ might not be defined in $[0,1]$ and
  $(\rhoc,1]$ respectively,
  in the cases \eqref{eq:sigmablowup} and \eqref{eq:gammablowup} when
  blowups happen, we shall adapt the following conventions in
  the descriptions of the regions \eqref{eq:super1}, \eqref{eq:super2}
  and \eqref{eq:sub2}:
  \[\sigma(\rho)=+\infty,\quad\forall~\rho\in[\rho_*,1],
    \quad\text{and}\quad
    \gamma(\rho)=+\infty,\quad\forall~\rho\in[\rho^*,1].\]
  For instance, if $\rho_0\ge\rho_*$, \eqref{eq:super1} implies
  that $(d_0,\rho_0)$ does not lie in the type I supercritical region
  for any $d_0\in\R$.
\end{remark}

Figure \ref{fig:CT} provides an illustration of the three regions.
Note that if $f$ is concave, satisfying \eqref{eq:f} with $\rhoc=1$,
the type II supercritical region \eqref{eq:super2} is empty.
The region \eqref{eq:sub2} is also empty.

Theorems \ref{thm:concave} and \ref{thm:main} follow directly from
Theorem \ref{thm:drho} by collecting all the characteristic paths.

The rest of the section is devoted to the proof of Theorem
\ref{thm:drho}. We start with a description of the dynamics of
$\rho$: it decreases in time, and approaches zero as $t\to\infty$.

\begin{lemma}\label{lem:rhodown}
  Consider \eqref{eq:drho} with initial data \eqref{eq:drhoinit}.
  Then $\rho(t)$ is strictly decreasing in time.
  Moreover, for any $\rho_1\in(0,\rho_0)$, there exists a finite time
  $t_1$ such that $\rho(t_1)\le\rho_1$, unless $d(t)$ blows up before
  $t_1$.
  In particular, if $(\rho,d)$ exists in all time, then
  \[\lim_{t\to\infty}\rho(t)=0.\]
\end{lemma}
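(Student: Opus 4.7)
My plan is to exploit the characteristic ODE
$\dot{\rho} = -\rho f(\rho) e^{-\rhob}$ established in \eqref{eq:rhochar} and the first line of \eqref{eq:drho}. The key observation is that under the hypotheses \eqref{eq:f}, the function $f$ is strictly positive on $(0,1)$ (it equals zero only at the endpoints, with $f'(0)>0$ and $f$ concave-then-convex), and the nonlocal factor $e^{-\rhob}$ admits a uniform positive lower bound. Combining these yields a quantitative decay rate for $\rho$ on compact subsets of $(0,1)$.

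For the strict monotonicity, I would note that Proposition \ref{prop:MP} keeps $\rho(t)\in[0,\rhoM]$ with $\rhoM<1$ throughout the interval of existence, and that $f(\rho)>0$ whenever $\rho\in(0,1)$. Hence $\dot{\rho}<0$ as long as $\rho>0$, establishing strict monotonicity. For the finite-time descent to a given level $\rho_1\in(0,\rho_0)$, I would fix such a $\rho_1$ and set
\[
c_1 := \min_{\rho\in[\rho_1,\rho_0]}\rho f(\rho) > 0,
\]
which is strictly positive by compactness together with the positivity of $\rho f(\rho)$ on $[\rho_1,\rho_0]\subset(0,1)$. Using the a priori bound $e^{-\rhob}\ge e^{-\KM m}=e^{-m}$ from \eqref{eq:nonlocalbound} (with $\KM=1$ for the kernel \eqref{eq:ourK}, and mass conservation valid throughout the smooth regime), I get
\[
\dot{\rho}(t)\le -c_1 e^{-m}\qquad\text{whenever }\rho(t)\in[\rho_1,\rho_0].
\]
Integrating, $\rho$ must reach the level $\rho_1$ by a time $t_1\le (\rho_0-\rho_1)/(c_1 e^{-m})$. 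This bound is meaningful provided the solution persists up to $t_1$, which by Theorem \ref{thm:lwp} is equivalent to $d$ remaining bounded on $[0,t_1]$ --- exactly the caveat in the statement.

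For the ``in particular'' clause, if $(\rho,d)$ exists for all time then the previous step applies for every $\rho_1\in(0,\rho_0)$, forcing $\limsup_{t\to\infty}\rho(t)\le\rho_1$ for arbitrarily small $\rho_1>0$, whence $\rho(t)\to 0$. Alternatively, the monotone bounded sequence $\rho(t)$ has a limit $\rho_\infty\ge 0$, and any positive value of $\rho_\infty$ would contradict the uniform bound $\dot{\rho}\le -c_*<0$ on the compact interval $[\rho_\infty,\rho_0]$. There is no real obstacle here: the only point requiring slight care is the validity of $e^{-\rhob}\ge e^{-m}$ along the characteristic, which is a standing a priori bound \eqref{eq:nonlocalbound} contingent on the conservation of mass, and therefore on the persistence of the classical solution.
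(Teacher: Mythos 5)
Your proposal is correct and follows essentially the same route as the paper: both rest on the a priori lower bound $e^{-\rhob}\ge e^{-m}$ coming from mass conservation and then integrate the resulting differential inequality for $\rho$ along the characteristic, with the caveat about persistence of the solution handled identically. The only immaterial difference is that the paper integrates $1/(\rho f(\rho))$ exactly by separation of variables to produce $t_1=e^m\int_{\rho_1}^{\rho_0}\frac{dr}{r f(r)}$, whereas you use the cruder compactness bound $c_1=\min_{[\rho_1,\rho_0]}\rho f(\rho)$; both yield a finite $t_1$.
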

\begin{proof}
  Apply the bound \eqref{eq:nonlocalbound} to the $\rho$-equation in
  \eqref{eq:drho} and get
  $\dot\rho \geq - \rho f(\rho)$, which implies
  $\rho(t)\geq \rho_0e^{-\|f\|_{L^\infty}t}>0$ is strictly positive in
  all time. On the other hand, we have 
  \[\dot\rho \leq -e^{-m} \rho f(\rho).\]
  Since the right hand side is strictly negative when $\rho\in(0,1)$,
  we conclude that $\rho(t)$ is strictly decreasing in time.
  
  Moreover, using separation of variables and integrating in $[0,t]$ yield
  \begin{equation}\label{eq:rhoint}
    \int_{\rho_0}^{\rho(t)}\frac{d\rho}{\rho f(\rho)}\leq -e^{-m}t.
  \end{equation}
  Define $G: (0,\rho_0]\to(-\infty,0]$ as follows
  \[G(\xi):=\int_{\rho_0}^{\xi}\frac{d\rho}{\rho f(\rho)}.\]
  Clearly, $G$ is an increasing function.

  Now, given any $\rho_1\in(0,\rho_0)$, we can take
  \[t_1=-e^mG(\rho_1)=e^m\int_{\rho_1}^{\rho_0}\frac{d\rho}{\rho f(\rho)}<+\infty.\]
  Then \eqref{eq:rhoint} implies
  $G(\rho(t))\leq G(\rho_1)$, which leads to $\rho(t)\leq \rho_1$.
\end{proof}

Next, we focus on the behaviors of the dynamics of $d$, which varies
in different regions of initial data.

\subsection{Blow-up of type I supercritical initial data}
For any type I supercritical initial data \eqref{eq:super1}, we have
the following Lemma on a positive lower bound of $d$.
\begin{lemma}\label{lem:dlow}
  Consider \eqref{eq:drho} with  type I supercritical initial data
  satisfying \eqref{eq:super1}.
  Then there exists $c>0$ such that $d(t)>c$ in the whole lifespan of $d$.
\end{lemma}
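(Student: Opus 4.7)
The approach is to work in the phase plane $(\rho,d)$ governed by the ODE \eqref{eq:trajode}, in which the type I supercritical condition $d_0>\sigma(\rho_0)$ places the initial data strictly above the graph of $\sigma$. Since $\rho(t)$ is strictly decreasing by Lemma \ref{lem:rhodown} and the shape of the trajectory does not depend on the nonlocal factor $e^{-\rhob}$, the coupled dynamics \eqref{eq:drho} simply parameterizes the planar curve $d=\d(\rho)$ solving \eqref{eq:trajode} with $\d(\rho_0)=d_0$, swept out as $\rho$ decreases from $\rho_0$. The positive lower bound will follow once I show $\inf \d>0$ on the $\rho$-domain of this trajectory, since then $d(t)=\d(\rho(t))$ inherits the same bound throughout its lifespan, regardless of whether the lifespan is finite or infinite.

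First I would establish that $\d(\rho)>\sigma(\rho)$ for every $\rho$ in the domain of $\d$. The right-hand side of \eqref{eq:trajode} is locally Lipschitz in the dependent variable on any strip bounded away from $\rho=0$, so standard ODE uniqueness forbids two distinct solutions from crossing. Since $\d(\rho_0)>\sigma(\rho_0)$, the strict inequality persists on the common domain.

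The heart of the argument is then to rule out $\d$ approaching zero. Arguing by contradiction, suppose $\d(\rho_n)\to 0$ along a sequence $\rho_n$ in the domain. Since $0<\sigma(\rho_n)<\d(\rho_n)\to 0$ and $\sigma(\rho)>0$ for $\rho\in(0,1)$ by Proposition \ref{prop:sigmapos} with $\sigma(0)=0$, continuity of $\sigma$ forces $\rho_n\to 0$, so $\d$ limits to the degenerate equilibrium $(0,0)$. Because $\d(\rho_n)/\rho_n\geq \sigma(\rho_n)/\rho_n\to \sigma'(0)=\beta$, any accumulation value of $\d(\rho)/\rho$ at the origin is at least $\beta$; Proposition \ref{prop:initiald} restricts the admissible slopes to $\{0,\beta\}$, so the incoming slope must equal $\beta$. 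The uniqueness clause of Theorem \ref{thm:sigma} then yields $\d\equiv\sigma$, contradicting $\d(\rho_0)>\sigma(\rho_0)$.

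The main technical obstacle I anticipate is the final step: Proposition \ref{prop:initiald} is stated under the hypothesis that $\d'(0)$ exists, whereas a priori the approach of our trajectory to the origin could be irregular, with only $\liminf$ and $\limsup$ of $\d(\rho)/\rho$ controlled by the comparison with $\sigma$. Closing this gap will likely require adapting the cone-invariance argument from the proof of Theorem \ref{thm:sigma}, showing that $\d$ is trapped in a cone about $\{d=\beta\rho\}$ near the origin and hence admits a well-defined incoming slope, at which point the uniqueness of $\sigma$ applies and the contradiction goes through.
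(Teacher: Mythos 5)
Your proof follows essentially the same route as the paper's: compare the trajectory $\d$ with $\sigma$ via non-crossing of solutions of \eqref{eq:trajode}, use Proposition \ref{prop:sigmapos} to get $\d>\sigma>0$, and rule out $\inf d=0$ by showing the trajectory could only reach $(0,0)$ with incoming slope $\beta$, which by the uniqueness in Theorem \ref{thm:sigma} would force $\d\equiv\sigma$, a contradiction. The technical point you flag --- that Proposition \ref{prop:initiald} presupposes $\d'(0)$ exists while the comparison only controls $\liminf \d(\rho)/\rho$ --- is genuine but is glossed over in the paper as well (it simply asserts $\d'(0)\ge\sigma'(0)=\beta$), so your proposed cone-trapping repair would, if anything, tighten the published argument.
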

\begin{proof}
  We express the trajectory of the dynamics in the $(\rho,d)$ phase
  plane by $d=\d(\rho)$, and compare the function $\d$ with
  the threshold function $\sigma$.
  Since $\d(\rho_0)=d_0>\sigma(\rho_0)$, we have
  \begin{equation}\label{eq:dgtrsigma}
    \d(\rho)>\sigma(\rho)>0,
  \end{equation}
  for any $\rho\in(0,\rho_0]$ that lies in the domain of $\d$.
  Here, the second inequality is due to Proposition
  \ref{prop:sigmapos}.
  Therefore, we have
  \begin{equation}\label{eq:dinf}
    \inf_td(t)=\inf_\rho \d(\rho)\geq0,
  \end{equation}
  where the equality can only be attained in the case when
  $\lim_{\rho\to0}\d(\rho)=0$, namely that $(0,0)$ lies on the
  trajectory. Moreover, from \eqref{eq:dgtrsigma} we have
  \begin{equation}\label{eq:dzerogtr}
    \d'(0)\ge\sigma'(0)=\beta.
  \end{equation}
  However, in view of Proposition \ref{prop:initiald}, any trajectory
  with $\d(0)=0$ must have either $\d'(0)=0$ or $\d'(0)=\beta$. The former
  contradicts with \eqref{eq:dzerogtr}. For the latter case, it has
  been shown in Theorem \ref{thm:sigma} that $\sigma$ is the only
  trajectory that enters $(0,0)$ with the slope $\beta$. Hence, the
  equality in \eqref{eq:dinf} cannot be reached, finishing the proof.
\end{proof}

With the uniform lower bound, we are ready to show that $d$ must blow
up in finite time.
Let us rewrite the dynamics of $d$ in \eqref{eq:drho} as
\begin{equation}\label{eq:ddym}
\dot d = -e^{-\rhob}\Big(f''(\rho)d^2+\big(f(\rho)+2\rho f'(\rho)\big)d+\rho^2 f(\rho)\Big) = C(\rho)\big(d-\d_-(\rho)\big)\big(d-\d_+(\rho)\big),
\end{equation}
where 
\[
  C(\rho)=-e^{-\rhob}f''(\rho),\,
  \d_\pm(\rho) = \frac{-\big(f(\rho) + 2 \rho f'(\rho)\big)
  \mp \sqrt{\big(f(\rho) + 2 \rho f'(\rho)\big)^2-4\rho^2f(\rho)f''(\rho)}}{2f''(\rho)}.
\]
Observe that when $\rho\in(0,\rho_c)$, the coefficient $C(\rho)>0$. 
The curves $(\rho, \d_\pm(\rho))$ are the two nullclines of $d$ in the
phase plane. We have $\d_-(\rho)<0<\d_+(\rho)$, and furthermore
\begin{equation}\label{eq:nclimit}
  \lim_{\rho\to0+}\d_\pm(\rho)=0.
\end{equation}
Therefore, when $\rho$ is small enough, the dynamics of $d$ would
behave like $\dot d\sim C(0)d^2$, which leads to a finite time blowup.

\begin{proof}[Proof of Theorem \ref{thm:drho}(a)]
  First, from \eqref{eq:nclimit} we can find a small $\rho_1>0$ such that
  \[2\d_+(\rho_1)<c,\]
  where $c$ is the uniform lower bound of $d$ as in Lemma \ref{lem:dlow}.

  From Lemma \ref{lem:rhodown}, there exists a finite time $t_1$
  such that $\rho(t_1)=\rho_1$, unless $d$ already blows up before $t_1$.

  We focus on the dynamics \eqref{eq:ddym} starting from $t_1$.
  From the hypotheses of $f$ in \eqref{eq:f}, we can obtain a uniform
  lower bound on $C(\rho)$ as
  \[C(\rho)\geq e^{-m}\cdot\min_{\rho\in[0,\rho_1]}\big(-f''(\rho)\big)=:C>0,\quad\forall~\rho\in[0,\rho_1].\]
  Since $\rho(t)\in(0,\rho_1]$ for any $t\geq t_1$, we deduce from
  \eqref{eq:ddym} that
  \[\dot d\geq C \big(d-\d_-(\rho)\big) \big(d-\d_+(\rho)\big)
  \geq C \big(d-\d_+(\rho)\big)^2\geq C \big(d-\d_+(\rho_1)\big)^2,\]
 with initial condition at $t_1$ satisfying
 \[d(t_1)>c>2\d_+(\rho_1),\]
 where we have used Lemma \ref{lem:dlow}.
 Solving the initial value problem would yield
 \[d(t)>\d_+(\rho_1)+\frac{\d_+(\rho_1)}{1-C\d_+(\rho_1)(t-t_1)}.\]
 Therefore, we must have
 \[\lim_{t\to t_*-}d(t)=+\infty\]
 at a finite time
 \[t_*\leq t_1+\frac{1}{C\d_+(\rho_1)}.\]
\end{proof}

\subsection{Blow-up of type II supercritical initial data}
For type II supercritical initial data \eqref{eq:super2},
the blowup is a direct consequence of a comparison with the threshold
function $\gamma$.

\begin{proof}[Proof of Theorem \ref{thm:drho}(b)]
  Compare the trajectory $d=\d(\rho)$ with the threshold function
  $\gamma$. Since $\d(\rho_0)=d_0\le\gamma(\rho_0)$, we have
  \[\d(\rho)\le\gamma(\rho),\]
  for any $\rho\in(\rhoc,\rho_0]$ that lies in the domain of $\d$.
  Since $\lim_{\rho\to\rhoc+}\gamma(\rho)=-\infty$, $\d$ must blow up
  to $-\infty$ at some $\rho_1\ge\rhoc$.
  Apply Lemma \ref{lem:rhodown}, $d(t)$ must blow up to at a finite
  time $t_1$.
\end{proof}

\subsection{Global regularity of subcritical initial data}
Let us first consider initial data that lie in the region
\eqref{eq:sub1}. The main idea is that the coefficient $C(\rho)$ in
the dynamics \eqref{eq:ddym} has a favorable sign
that prevents $d$ from going to $-\infty$.
On the other hand, $d$ is controlled from above by $\sigma$.

\begin{proof}[Proof of Theorem \ref{thm:drho}(c) for region \eqref{eq:sub1}]
  We start with obtaining a lower bound on $d$.
  First, suppose $\rho_0<\rhoc$. In view of the definition of $\d_-$,
  it has a uniform lower bound on $[0,\rho_0]$
  \[\min_{\rho\in[0,\rho_0]}\d_-(\rho)\geq \underline{d}>-\infty,\]
  where $\underline{d}$ depends on $f$ and $\rho_0$.
  Since $C(\rho)>0$ for $\rho\in[0,\rho_0]$, \eqref{eq:ddym} implies
  $\dot d>0$ as long as $d<\underline{d}$. This implies the lower
  bound
  \[d\geq\min\{d_0,\underline{d}\}\]
  in the whole timespan of $d$.
  
  For $\rho_0=\rhoc$, it is easy to verify that the dynamics
  \eqref{eq:ddym} is locally wellposed. Hence, there exists a time
  $t_1>0$ such that solution $d(t_1)$ exists. From the monotonicity of
  $\rho$ in time, we know $\rho(t_1)<\rhoc$. Hence the dynamics
  starting from $t_1$ reduces to the prior case, leading to a lower
  bound of $d$.

To obtain an upper bound of $d$, note that $\d(\rho_0)=d_0\leq
\sigma(\rho_0)$.
We compare the trajectory $\d$ with $\sigma$ and get
\begin{equation}\label{eq:dlessigma}
  \d(\rho)\leq\sigma(\rho).
\end{equation}
From Proposition \ref{prop:sigmablow}, we know $\sigma$ is bounded in
$[0,\rhoc]$.
Since $\rho(t)\leq \rho_0$ for all time, we conclude that
\[d(t)\leq \max_{\rho\in[0,\rho_0]}\d(\rho)\leq \max_{\rho\in[0,\rhoc]}\sigma(\rho)<+\infty.\]
\end{proof}

For initial data that lie in the region \eqref{eq:sub2},
note that $\sigma$ is not necessarily defined for large
$\rho$. Instead, we may bound $d$ from above using the dynamics
\eqref{eq:ddym}.
When $\rho\in(\rhoc,1)$, the coefficient $C(\rho)<0$. It is possible
that the quadratic form in \eqref{eq:ddym} has no real roots, and
$\d_\pm$ do not exist. In which case we have $\dot d<0$.
If $\d_\pm$ exist, we still have $\dot d<0$ when $d>\d_-\geq\d_+$.
Hence, $d$ can not blow up to $+\infty$.
The lower bound can be controlled by $\gamma$.

\begin{proof}[Proof of Theorem \ref{thm:drho}(c) for region \eqref{eq:sub2}]
  For the upper bound, we apply Proposition \ref{prop:sigmablow}.
  If $\sigma$ is well-defined in $[0,\rho_0]$, then
  $d$ is bounded by $\sigma$ by comparison \eqref{eq:dlessigma}.
  If $\sigma$ blows up at $\rho_*\in(\rhoc,\rho_0)$, we observe that
  $\d_-$, if exists, has a uniform upper bound on $[\rho_*,\rho_0]$
  \[\max_{\rho\in[\rho_*,\rho_0]\cap {\text{Dom}(\d)}}\d_-(\rho)\leq \overline{d}<+\infty,\]
  where $\overline{d}$ depends on $f$, $\rho_*$ and $\rho_0$. 
  Since $C(\rho)<0$ for $\rho\in[\rho_*, \rho_0]$,
  \eqref{eq:ddym} implies $\dot d<0$ as long as $d>\overline{d}$. This
  leads to the upper bound
  \[d\leq\max\{d_0, \overline{d}\}.\]
  Once $\rho(t)$ drops below $\rho_*$, it can be controlled by
  $\sigma$.

  For the lower bound, can compare the trajectory $\d$ with the
  threshold function $\gamma$. Since $\d(\rho_0)=d_0>\gamma(\rho_0)$,
  we have
  \[\d(\rho)>\gamma(\rho),\quad \forall~\rho\in[\rhoc,\rho_0].\]
  Hence, $d$ has a lower bound as long as $\rho>\rhoc$.
  Moreover, $\d(\rhoc)$ is bounded. This is because if
  $\lim_{\rho\to\rhoc+}\d(\rho)=-\infty$, we deduce from Theorem
  \ref{thm:gamma} that $\d=\gamma$, violating the initial condition
  \eqref{eq:sub2}.
  Therefore, by Lemma \ref{lem:rhodown}, there exists a time $t_1$
  such that $\rho(t_1)=\rhoc$ and $d(t_1)=\d(\rhoc)$ is bounded. The
  dynamics enter the region \eqref{eq:sub1} at $t_1$. The global
  behavior follows from the proof for the region \eqref{eq:sub1}.
\end{proof}

\bibliographystyle{plain}
\bibliography{traffic}

\end{document}